\theoremstyle{plain}
\newtheorem{theorem}{Theorem}
\newtheorem{proposition}[theorem]{Proposition}
\newtheorem{lemma}[theorem]{Lemma}
\newtheorem{corollary}[theorem]{Corollary}
\theoremstyle{definition}
\begin{document}

\title{Non-periodic not everywhere
dense trajectories in triangular billiards}

\author[J.~Slipantschuk]{J.~Slipantschuk}
\address{%
J.~Slipantschuk\\
Department of Mathematics\\
University of Warwick\\
Coventry  CV4 7AL\\
UK.
}
\email{julia.slipantschuk@warwick.ac.uk}

\author[O.F.~Bandtlow]{O.F.~Bandtlow}
\address{%
O.F.~Bandtlow\\
School of Mathematical Sciences\\
Queen Mary University of London\\
London E3 4NS\\
UK.
}
\email{o.bandtlow@qmul.ac.uk}

\author[W.~Just]{W.~Just}
\address{%
W.~Just\\
Institut f\"ur Mathematik\\
Universit\"at Rostock\\
D-18057 Rostock\\
Germany. 
}
\email{wolfram.just@uni-rostock.de}

\date{June 13, 2024}

\keywords{Irrational billiard, recurrence, induced map}
\subjclass[2010]{Primary 37C83; Secondary 37C79, 37B20, 37E30}

\maketitle

\begin{abstract}
Building on tools that have been successfully
used in the study of rational billiards,
such as induced maps and interval exchange transformations, we provide a construction of a one-parameter family of isosceles triangles exhibiting
non-periodic trajectories that are not everywhere dense.
This provides, by elementary means, 
a definitive negative answer to a long-standing
open question on
the density of non-periodic trajectories in triangular billiards.
\end{abstract}

\section{Introduction and results}\label{sec1}

Billiards, that is, the ballistic motion of a point particle in the plane with elastic collisions at
the boundary, are among the simplest mechanical systems producing intricate
dynamical features and thus serve as a paradigm in applied dynamical
systems theory \cite{ChMa:06}. The seemingly trivial case of billiards with piecewise straight
boundaries, known as polygonal billiards, offers surprisingly hard challenges \cite{Gutk_CHAOS12}.
When the inner angles of the polygon are rational multiples of $\pi$ the billiard dynamics
is dominated by a collection of conserved quantities and a rigorous and sophisticated
machinery for its treatment becomes available, see, for example, \cite{Gutk_JSP96,MaTa:02} for overviews.
Much less is known in the irrational case. A notable exception is the proof
of ergodicity of Lebesgue measure for a topologically large class of irrational
polygonal billiards \cite{KeMaSm_AM86}.
It is however not clear what this result means for numerical simulations of the billiard dynamics, as 
numerical studies of polygonal billiards are inconclusive.
Depending on the shape of the polygon, correlations in irrational billiards may or may not exhibit
decay \cite{ArCaGu_PRE97,CaPr_PRL99},
and even the ergodicity of Lebesgue measure
has been questioned \cite{WaCaPr_PRE14}. The relevance of symmetries has
been emphasised as an explanation for this conundrum \cite{ZaSlBaJu_PRE22}
and these predominantly numerical studies are underpinned by 
well established rigorous results on recurrence in polygonal billiards, 
see for instance 
\cite{Trou_RCD04, Trou_AIF05}.

In this article we shall be concerned with the simplest examples of polygonal billiards, namely those of triangular
shape. In particular we shall revisit a hypothesis formulated by Zemlyakov, see \cite{Galp_CMP83},
according to which trajectories are either periodic or cover the billiard table densely. While
\cite{Galp_CMP83} shows that this dichotomy does not hold in convex\footnote{For the simpler case of non-convex billiards, McMullen constructed an L-shaped example for which this dichotomy fails.} polygonal billiards with
more than three sides, the proof is flawed for triangular billiards, as pointed out
recently in \cite{Toka_CMP15}. Thus the existence of non-periodic and not everywhere dense
trajectories in triangular billiards remains an open problem, see also
\cite{DaDiRuLa_AG18} and references therein. We will fill this
gap by constructing trajectories of this type for a large set of symmetric
triangular billiards.
For this purpose, similarly to \cite{Galp_CMP83}, we reduce this problem to the properties of an induced one-dimensional map, a technique more commonly used in the case of rational billiards.
Leaving details of the
notation for later sections, we will prove the following.

\begin{theorem}\label{thrm1}
Consider a billiard map in the isosceles triangle determined by inner angles
$(\alpha,\alpha,\pi-2\alpha)$ with base angle $\alpha \in (\alpha_*,3\pi/10)$ for some $\alpha_*$ satisfying $\pi/4<\alpha_*<2\pi/7$.
Then there exist an angle $\phi_* \in (0, \pi)$ and an induced map on the base of the triangle $\{[k=1,\phi_*,x]: x\in [0,1]\}$ which is a rotation on the 
unit interval 
$x \mapsto x+\omega \mod 1$ with
$\omega =\cos(3\alpha)/(2 \cos(\alpha)\cos(4 \alpha))$.
\end{theorem}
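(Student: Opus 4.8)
The plan is to read off the induced dynamics from the standard unfolding of the billiard: each time the orbit meets a side we reflect the triangle $T$ rather than the orbit, so that an orbit becomes a straight segment of fixed direction crossing a chain $T_0=T,T_1,T_2,\dots$ of reflected copies, each obtained from its predecessor by reflection in the common edge. I would first pin down $\phi_*$. Reflecting the table in a leg of $T$ rotates the developed frame by $\pm2\alpha$ (sign according to which leg), while reflection in the base contributes $0$; hence the product of the reflections accumulated along a path that meets the two legs equally often and the base not at all is a \emph{translation}. I take $\phi_*$ to be the direction for which the developed segment issued from the base first meets a copy of the base after such a balanced word $w$ of leg-reflections — equivalently, the shortest direction in $(0,\pi)$ closing up this way — which determines $\phi_*$ explicitly as a combination of $\alpha$ and $\pi$. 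Since the return is realised by a translation, the base-copy reached is parallel to the original, the orbit meets it at the same angle $\phi_*$, and folding everything back to $T$ yields an orientation-preserving affine self-map of the base segment.

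The combinatorial heart of the argument is to verify that this developed picture is actually correct for \emph{every} starting point and the entire parameter window: for each $\alpha\in(\alpha_*,3\pi/10)$ and each $x\in[0,1]$ the segment of slope $\tan\phi_*$ issued from $[1,\phi_*,x]$ crosses exactly the prescribed chain of copies (one of two words $w$, $w'$, according to whether $x$ lies in $[0,1-\omega)$ or in $[1-\omega,1]$, with $w'$ the image of $w$ under the isosceles symmetry $\sigma$ of $T$) before it hits a base-copy. Equivalently, the segment must remain inside the ``corridor'' swept out by those triangle copies, and since the boundary of the corridor is a broken line built from leg- and base-copies, this reduces to finitely many explicit trigonometric inequalities asserting that the vertices of the corridor lie on the correct side of the lines of slope $\tan\phi_*$ through the two endpoints of the base. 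The constant $\alpha_*$ is precisely the value of $\alpha$ at which the first of these inequalities degenerates to an equality (a corridor vertex falling on the critical line); verifying this also locates $\alpha_*$ in $(\pi/4,2\pi/7)$, and I would take it as the \emph{definition} of $\alpha_*$. This uniform combinatorial control is the step I expect to be the main obstacle; once it is in hand the rest is bookkeeping, and in particular the fact that every point of the base returns (so that the induced map is defined on all of $[0,1]$) follows from the finiteness of the corridor, or else from the general recurrence results for polygonal billiards cited above.

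With the two words in hand, the folded return map is the affine map $x\mapsto x+\omega$ on $[0,1-\omega)$ coming from $w$ and $x\mapsto x+\omega-1$ on $[1-\omega,1]$ coming from $w'$, which together are exactly the rotation $x\mapsto x+\omega\bmod1$; the two branches fit together into a single circle rotation precisely because $w'=\sigma(w)$, so that their translation lengths differ by $1$. It then remains to evaluate $\omega$, which I would do by computing the endpoints of the terminal base-copy in the frame in which $T$ has unit base on the $x$-axis: this is a product of reflections applied to the base, and repeated use of the law of sines along $T_0,\dots,T_n$ collapses the displacement to $\cos(3\alpha)/\bigl(2\cos(\alpha)\cos(4\alpha)\bigr)$, rewriting $2\cos\alpha\cos4\alpha=\cos3\alpha+\cos5\alpha$ to recognise the stated form. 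Finally, the sign count ($\cos3\alpha,\cos4\alpha,\cos5\alpha<0$ while $\cos\alpha>0$ on the relevant range) gives $\omega\in(0,1)$, as it must for a rotation number.
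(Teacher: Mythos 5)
The unfolding picture you describe is indeed the geometric idea behind the result (it is exactly the paper's Figure~1), but your proposal defers precisely the part that constitutes the proof. You never exhibit the two bounce words, never derive $\phi_*$ or $\alpha_*$, and the ``finitely many explicit trigonometric inequalities'' that would establish the corridor for every $x\in[0,1]$ and every $\alpha$ in the window are not stated, let alone verified; in the paper this is the bulk of the argument (Lemmas~\ref{lem1}--\ref{lem10} and Propositions~\ref{prop1}--\ref{prop2}: the explicit sequences $(1,2,3,1,3,1)$ and $(1,2,1)$, the angles $\psi_t,\theta_t$, the coordinates $\xi_t(\delta),\eta_t(\delta)$, the break point $x_D=1-\omega$, and the inequalities $0<\xi_t(\delta)<s_{k_t}$, $0<\eta_t(\delta)<s_{k_t}$ valid on all of $(\alpha_*,3\pi/10)$). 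Labelling this step ``the main obstacle'' and leaving it as bookkeeping means the theorem is not proved: without it you do not know that the induced map is defined at \emph{every} point of the base, that there are exactly two branches (of combinatorial lengths $10$ and $4$), or that the translation lengths are $\omega$ and $\omega-1$; appealing to general recurrence theorems for polygonal billiards gives only almost-everywhere return statements and cannot substitute for this.

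Moreover, several structural claims you make misdescribe the construction, so the plan as stated would not even locate the correct direction. The actual return words are $1,2,3,1,3,1,2,1,2,3,1$ and $1,2,1,3,1$: they contain interior reflections in the \emph{base} and involve an odd number of reflections, so the developing isometry is an orientation-reversing map with horizontal axis rather than a translation produced by ``a balanced word of leg-reflections avoiding the base''; equality of the return angle is obtained in the paper from the isosceles symmetry combined with time reversal (Lemma~\ref{lem9}), each word being the concatenation of a half-word with its mirrored reverse --- in particular the two words are \emph{not} images of each other under the symmetry $\sigma$ (they have different lengths). Also, the developed segment meets base copies at intermediate times with different angles, so ``first meets a copy of the base'' does not characterise the return; and $\phi_*$ is not ``a combination of $\alpha$ and $\pi$'': it is pinned down by a genuinely metric closing condition, namely the generalised diagonal from the left to the right base vertex, i.e.\ the transcendental equation $\sin(6\alpha+\phi)-\sin(2\alpha+\phi)+\sin(\phi)=0$, while $\alpha_*$ is the root of $\sin(7\alpha)-\sin(3\alpha)+\sin(\alpha)=0$ at which this diagonal hits the apex. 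Your alternative characterisation (``the direction for which the segment first closes up after a balanced word'', or ``the shortest direction'') is ill-defined: for a fixed admissible word the set of directions realising it for some starting point is open, so nothing singles out $\phi_*$ without the vertex-to-vertex condition. These gaps --- the missing determination of $\phi_*$ and $\alpha_*$, the unverified corridor inequalities uniform in $\alpha$, and the unproved two-interval structure with rotation number $1-x_D$ --- are exactly what the paper's algebraic lemmas supply.
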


As the rotation number $\omega$ is continuous and strictly mononotic for the given range of $\alpha$, 
this theorem implies that non-periodic 
not everywhere dense trajectories exist
in the billiard dynamics of the isosceles triangle for all but a countable subset of $\alpha \in (\alpha_*,3\pi/10)$,
providing a negative answer to the hypothesis by Zemlyakov. More precisely, we have the following corollary.

\begin{corollary}\label{coro1}
For all $\alpha \in (\alpha_*,3\pi/10)$
with $\cos(5\alpha)/\cos(3 \alpha) \in \mathbb{R}\setminus\mathbb{Q}$, in particular for all $\alpha\in (\alpha_*,3\pi/10)$ with $\alpha/\pi$ algebraic and irrational, the billiard dynamics
in the isosceles triangle contains trajectories which are non-periodic and not
everywhere dense in the triangle.
\end{corollary}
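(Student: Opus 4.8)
The plan is to extract everything from Theorem~\ref{thrm1}. Once the first-return map to the section $S=\{[1,\phi_*,x]:x\in[0,1]\}$ is known to be the rigid rotation $R_\omega\colon x\mapsto x+\omega\bmod 1$, proving Corollary~\ref{coro1} splits into two essentially independent tasks: (a) deciding for which $\alpha$ the rotation number $\omega=\cos(3\alpha)/(2\cos\alpha\cos 4\alpha)$ is irrational, and (b) checking that an irrational $\omega$ forces the associated billiard trajectory to be non-periodic and not everywhere dense. I would carry out (a) first, as it carries the arithmetic content, and treat (b) as bookkeeping on top of the theorem.

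For (a), apply the product-to-sum identity $2\cos\alpha\cos 4\alpha=\cos 5\alpha+\cos 3\alpha$ to rewrite
\[
\omega=\frac{\cos 3\alpha}{\cos 5\alpha+\cos 3\alpha},\qquad\text{so that}\qquad \frac{1}{\omega}=1+\frac{\cos 5\alpha}{\cos 3\alpha}.
\]
On $(\alpha_*,3\pi/10)\subset(\pi/4,3\pi/10)$ one has $3\alpha\in(3\pi/4,9\pi/10)$ and $4\alpha\in(\pi,6\pi/5)$, whence $\cos 3\alpha\neq 0$ and $\cos 5\alpha+\cos 3\alpha=2\cos\alpha\cos 4\alpha\neq 0$; thus the display is legitimate, $\omega\neq 0$, and $\omega\notin\mathbb Q\iff\cos 5\alpha/\cos 3\alpha\notin\mathbb Q$. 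This yields the first assertion of the corollary immediately. For the second assertion, suppose $\cos 5\alpha/\cos 3\alpha=p/q$ with $p\in\mathbb Z$, $q\in\mathbb Z\setminus\{0\}$; setting $z=e^{i\alpha}$ and clearing denominators in $q\cos 5\alpha-p\cos 3\alpha=0$ yields
\[
q\,z^{10}-p\,z^{8}-p\,z^{2}+q=0,
\]
a nonzero integer polynomial relation, so $e^{i\alpha}$ would be algebraic. But when $\alpha/\pi$ is algebraic and irrational, $e^{i\alpha}=(-1)^{\alpha/\pi}$ is transcendental by the Gelfond--Schneider theorem; contradiction. Hence $\omega$ is irrational for every such $\alpha$.

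For (b), fix $\alpha$ with $\omega$ irrational and a starting configuration $[1,\phi_*,x_0]\in S$. Since $R_\omega$ is an irrational rotation, the orbit $\{x_0+n\omega\bmod 1:n\in\mathbb Z\}$ is infinite, so the billiard trajectory through $[1,\phi_*,x_0]$ never revisits that configuration and is therefore non-periodic. At the same time this trajectory meets $S$ at each of these points, so it is confined to the tower over $S$ whose height (the first-return time) takes finitely many values by the construction in the proof of Theorem~\ref{thrm1}; consequently it is a concatenation of billiard segments whose directions lie in a fixed finite set and which, as that explicit construction shows, remain in a proper closed subset of the triangle — in particular bounded away from the apex of angle $\pi-2\alpha$. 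Thus the trajectory is not everywhere dense, which completes the proof. The only non-elementary input above is the Gelfond--Schneider step; it is needed solely for the second assertion, and the weaker statement that $\omega$ is irrational for all but countably many $\alpha$ is elementary, since for each fixed $(p,q)$ the relation $q\cos 5\alpha=p\cos 3\alpha$ becomes, via Chebyshev polynomials, a nontrivial polynomial equation in $\cos\alpha$ with only finitely many solutions in the interval. The point I would expect to have to spell out most carefully is not the arithmetic but this last geometric claim — pinning down exactly which part of the triangle the trajectory avoids — although that is already implicit in the tower structure furnished by Theorem~\ref{thrm1}.
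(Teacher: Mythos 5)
Your overall route is the same as the paper's: reduce irrationality of $\omega$ to irrationality of $\cos(5\alpha)/\cos(3\alpha)$ via $2\cos\alpha\cos 4\alpha=\cos 3\alpha+\cos 5\alpha$, invoke Gelfond--Schneider (your explicit relation $qz^{10}-pz^{8}-pz^{2}+q=0$ for $z=e^{\mathrm{i}\alpha}$ is exactly the step the paper leaves terse), and then get non-periodicity from the irrational rotation and non-density from confinement to the two cylinders. The arithmetic part (a) is complete and correct.

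The point you flag at the end is, however, not merely a detail to be ``spelled out'': it is the one substantive ingredient of the corollary beyond Theorem~\ref{thrm1}, and the statement of Theorem~\ref{thrm1} alone does not furnish it. Knowing that the return map to $\{[1,\phi_*,x]\}$ is a rotation with return times in $\{4,10\}$ does not by itself exclude that the union of the two cylinders fills the whole triangle, in which case an irrational $\omega$ would give \emph{dense} trajectories and the corollary would fail. What is needed is a uniform $\varepsilon>0$ keeping all bounces on the legs at distance at least $\varepsilon$ from the apex; this is precisely the paper's Lemma~\ref{lem11}, proved by observing that the coordinate functions $\xi_t(\delta)$ and $\eta_t(\delta)$ are affine (hence monotone) in $\delta$, evaluating them at the cylinder boundaries $\delta\in\{0,x_D,1\}$ (Lemmas~\ref{lem5}, \ref{lem7}, \ref{lem10}), and taking a minimum over the finitely many resulting quantities; one then passes from ``no bounce near the apex'' to ``no trajectory segment near the apex''. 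Your proposal asserts this avoidance (``remain in a proper closed subset \ldots bounded away from the apex'') as implicit in the tower structure, so the proof as written has a gap exactly there, although the missing estimate is indeed available from the explicit construction. A second, minor, omission: you take an arbitrary $x_0$, but countably many initial points (the $R_\omega$-preimages of $0$ and $x_D$) lead to singular orbits that hit a vertex, so one must choose $x_0$ outside this countable set (the paper's ``Lebesgue almost all'' statement); since the corollary only claims existence, this is easily repaired.
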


The main idea of the proof can be gleaned from the Zemlyakov-Katok
unfolding of the billiard dynamics \cite{ZeKa_MNAS75}.
Unfolding
the dynamics in a particular direction determined by an orbit starting and ending at a vertex (known as a generalised diagonal),
it can be seen that
the dynamics takes place in two recurrent cylinders, see Figure~\ref{fig1}.
This occurs, for instance, for an
isosceles triangle with base angle $\alpha=\pi \sqrt{3}/6$. For the geometrically minded
reader we summarise the essence of the proof. The existence of the required cylinders
can be verified by an explicit coordinatisation of the vertices in Figure \ref{fig1}.
Moreover, symmetry considerations show that this cylinder configuration
persists for all values $\alpha$ in a certain neighbourhood of $\pi\sqrt{3}/6$.
As a result it is then possible to introduce an induced map (on the base of the triangle),
which turns out to be a rotation with rotation number
varying continuously with $\alpha$, in particular taking
irrational values for a full-measure subset of the admissible
range of $\alpha$ values.
The construction also reveals
that the cylinders do not cover the whole interior of the
triangle, thus yielding non-periodic and not everywhere dense trajectories,
together forming a non-trivial flat strip in the sense of \cite{BoTr_FM11}.
Most of this article is devoted to making this
argument rigorous and completely explicit by an algebraic approach.
The idea for the geometric construction depicted in Figure \ref{fig1}
has been reported in \cite{ZaSlBaJu_PD23} where anomalous dynamics and 
recurrence in triangular billiards has been studied by a combination of
numerical computations and analytic arguments. The exposition contained 
in that reference provides compelling numerical evidence that the dynamics
in a particular direction is governed by an irrational rotation map, but
a rigorous proof for this observation has not been provided so far.

\begin{figure}[!h]
\centering
\includegraphics[width=0.7\textwidth]{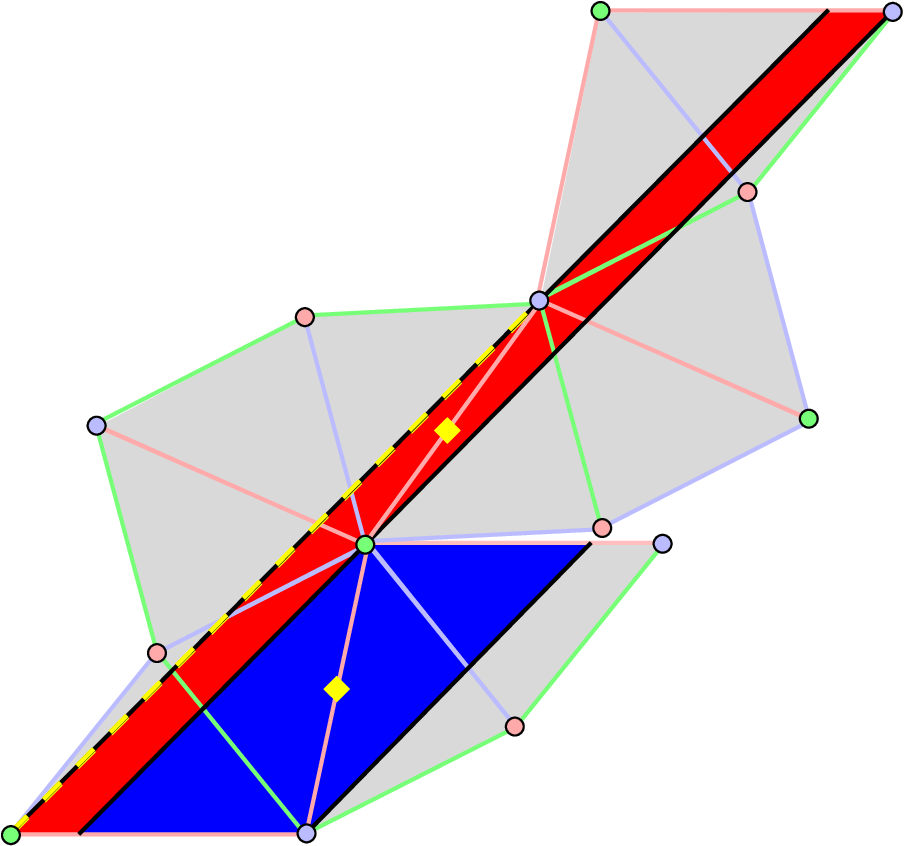}
\caption{Diagrammatic view of the Zemlyakov-Katok unfolding of
the billiard dynamics in an isosceles triangle with a 10- (red) and a 4-recurrent
cylinder (blue), respectively. Each cylinder is point symmetric with respect to
the midpoint of the central base (yellow diamond).
The unfolded generalised diagonal is shown in dashed yellow.
The three sides of the triangle and the vertices
have been coloured (base: light red, right leg: light green, left leg: light blue). 
\label{fig1}}
\end{figure}

We note that the proof of  Corollary \ref{coro1}
implies that the constructed billiard trajectories
never visit a certain neighbourhood around a tip of the isosceles triangle. This neighbourhood
can be replaced by a polygonal one, forming a convex $n$-gon for any $n\geq 4$, thus providing
an alternative, accessible, and elementary 
proof of Theorem 1 in \cite{Galp_CMP83} on the existence of non-periodic and not
everywhere dense billiard trajectories in convex $n$-gons.

We would like to mention in passing, that the complementary question of characterising billiards satisfying
the so-called ``Veech dichotomy'', that is those with the property that each direction is either completely periodic or uniquely ergodic,
has received significant attention in the literature in the case of rational billiards; see for example \cite{Gutk_ETDS84, Veec_IM89, KeSm_CMV00, Mcmu_IM06}.

This article is organised as follows. After fixing notation in Section \ref{sec2}, the existence of the
generalised diagonal will be established by Lemma \ref{lem6} in Section \ref{subsec2}.
We then turn to the existence and the properties of the two recurrent cylinders in
Proposition \ref{prop1} in Section \ref{subsec3} and Proposition \ref{prop2} in
Section \ref{subsec4}, respectively. The symmetry of the triangle will be instrumental
in setting up these cylinders and Lemma \ref{lem9} of Section \ref{subsec3}
summarises the main impact of the symmetry.
The proof of the main results follows standard arguments and will be presented
in Section \ref{subsec5}.

Our construction works for a limited range of base angles $\alpha\in(\alpha_*,3\pi/10)$.
For base angles outside this range the particular generalised diagonal or the recurrent
cylinders of period ten and four cease to exist. Nevertheless we suspect that the main
conclusion of Corollary \ref{coro1}, the existence of non-periodic not everywhere dense
orbits holds for almost all isosceles triangles. Analogous constructions
can be performed for other angles, but a more systematic approach would 
be needed to cover the general case. For trivial reasons
analogous statements hold in right-angled triangles.

\section{Notation and billiard map}\label{sec2}

Consider a triangle with positively oriented boundary. The sides
are labelled by a  cyclic index $k=1,2,3$. We denote by $s_k$ the
length of side $k$. The side with label $k=1$ is called the base.
We chose units of length such that $s_1=1$. Denote by  $\gamma_2$ and
$\gamma_3$ the left and right inner angle on the base, respectively. The angle opposite
to the base is denoted by $\gamma_1$. We shall focus exclusively on the case
of isosceles triangles, that is, $\gamma_2=\gamma_3=\alpha$. It readily follows that
$s_2=s_3=1/(2 \cos(\alpha))$.

The ballistic motion of a point particle with elastic bounces on the sides
of the triangle traces out a planar curve consisting of straight line segments. We call
this curve the \emph{trajectory}. We denote by $x_t^{[k]}$, $0<x_t^{[k]}<s_k$,
the location of the bounce of the particle (at discrete time $t$) at side $k$,
and by $\phi_t^{[k]}\in (0,\pi) $ the angle between the oriented side and the outgoing ray
of the trajectory. We call a move \emph{counter-clockwise (ccw)} if a bounce on side $k$ is
followed by a bounce on side $k+1$. Similarly we call a move \emph{clockwise (cw)}
if a bounce on side $k$ is followed by a bounce on side $k-1$. Subsequent
bounces are related by the \emph{billiard map}
\begin{equation}\label{eq:bm}
(x_{t}^{[k_{t}]}, \phi_{t}^{[k_{t}]} )\mapsto  (x_{t+1}^{[k_{t+1}]}, \phi_{t+1}^{[k_{t+1}]} )
\end{equation}
where
\begin{align}
\phi_{t+1}^{[k_{t+1}]} &=
\left\{
\begin{array}{lcl}
\pi-\phi_t^{[k_t]}  -\gamma_{k_t-1} & \mbox{ if } & k_{t+1}=k_t+1 \mbox{ (ccw)} \\
\pi-\phi_t^{[k_t]}  + \gamma_{k_t+1} & \mbox{ if } & k_{t+1}=k_t-1 \mbox{ (cw)}
\end{array} \right. \label{eq1}  \\
x_{t+1}^{[k_{t+1}]} &=
\left\{
\begin{array}{lcl}
(s_{k_t}-x_t^{[k_t]}) \sin(\phi_t^{[k_t]})/\sin(\phi_{t+1}^{[k_{t+1}]}) &
\mbox{ if } & k_{t+1}=k_t+1 \mbox{ (ccw)}\\
s_{k_{t+1}}-x_t^{[k_t]} \sin(\phi_t^{[k_t]})/\sin(\phi_{t+1}^{[k_{t+1}]}) & \mbox{ if } &
k_{t+1}=k_t-1 \mbox{ (cw)}
\end{array} \right. \label{eq2}
\end{align}
As it will be useful to keep track of the sequence of bouncing sides, we use a slightly non-standard notation
and call an \emph{orbit} a finite or infinite sequence of triplets $([k_t,\phi_t^{[k_t]},x_t^{[k_t]}])_{t \in I}$ which
obeys the billiard map (\ref{eq:bm}).
Each orbit corresponds to a
trajectory in the plane, and vice versa. We call a point $[k,\phi^{[k]},x^{[k]}]$ 
\emph{singular}, if it corresponds to one of the corners of the triangle, that is, if $x^{[k]}=0$ or
$x^{[k]}=s_k$. We call an orbit \emph{regular} if all its points are non-singular. An orbit which starts
or ends at a singular point will be referred to as a \emph{singular orbit}, while a billiard orbit which starts and ends
at a singular point is called a \emph{generalised diagonal}\footnote{When viewing
the billiard flow as a flow on a translation surface,
this is also referred to as a \emph{saddle connection}.}.

For a fixed side $k$ of the triangle and a fixed angle $\phi$, we will refer to
the family of parallel trajectory segments reflecting from $k$ at angle $\phi$ and returning to $k$ with the same
angle $\phi$ after a fixed sequence of bouncing sides as a \emph{recurrent cylinder}.

\section{Proof of results}\label{sec3}

Our proof consists of several steps. We will first establish the existence of a suitable
induction angle, such that an orbit emanating from the left endpoint
of the base at this angle forms a generalised diagonal with a certain length-5 sequence
of bouncing sides. We will then show
that \emph{every} orbit emanating from the base at this angle returns to the base with the same angle after a fixed number of bounces (either 10 or 4, depending on the initial point). The two corresponding sets of billiard trajectories will form two recurrent
cylinders in the plane, crucially bounded away by a positive distance from one of the
triangle's vertices.
This construction will yield an induced map, forming an interval exchange transformation over two subintervals of the triangle base. The rotation number of this
interval exchange transformation will depend continuously on the angle of the isosceles triangle, implying an irrational rotation and hence dense trajectories in the union of the
recurrent cylinders for a large set of angles of the triangle.

\subsection{Induction angle}\label{subsec1}
We begin by proving several lemmas, which will be used to establish that for
a suitable range of values of $\alpha$ there exists an angle $\phi_*$ (depending on $\alpha$), such that
the orbit emanating from the left endpoint of the base at angle $\phi_*$ forms a generalised diagonal.

\begin{lemma}\label{lem1}
For $\alpha \in (\pi/4, 3 \pi/10)$ the equation $g(\alpha)=\sin(7 \alpha)-\sin(3 \alpha)+
\sin(\alpha)=0$ has a unique solution $\alpha_* \in (\pi/4, 2 \pi/7)$.
\end{lemma}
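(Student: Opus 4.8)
The plan is to combine the intermediate value theorem with a monotonicity estimate. First I would record the values of $g$ at the endpoints of the target interval. At $\alpha=\pi/4$ one has $\sin(7\pi/4)=-\sqrt{2}/2$ and $\sin(3\pi/4)=\sin(\pi/4)=\sqrt{2}/2$, so $g(\pi/4)=-\sqrt{2}/2<0$. At $\alpha=2\pi/7$ one has $\sin(7\alpha)=\sin(2\pi)=0$, $\sin(3\alpha)=\sin(6\pi/7)=\sin(\pi/7)$ and $\sin(\alpha)=\sin(2\pi/7)$, so $g(2\pi/7)=\sin(2\pi/7)-\sin(\pi/7)>0$, since $0<\pi/7<2\pi/7<\pi/2$ and $\sin$ is strictly increasing on $(0,\pi/2)$. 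Continuity of $g$ and the intermediate value theorem then yield at least one zero $\alpha_*\in(\pi/4,2\pi/7)$.

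For uniqueness I would show $g$ is strictly increasing on $(\pi/4,2\pi/7)$ by bounding $g'(\alpha)=7\cos(7\alpha)-3\cos(3\alpha)+\cos(\alpha)$ termwise. On this range $7\alpha\in(7\pi/4,2\pi)$, where $\cos$ is increasing, so $\cos(7\alpha)>\cos(7\pi/4)=\sqrt{2}/2$; also $3\alpha\in(3\pi/4,6\pi/7)\subset(\pi/2,\pi)$, where $\cos$ is negative and decreasing, so $\cos(3\alpha)<\cos(3\pi/4)=-\sqrt{2}/2$; and $\alpha\in(\pi/4,2\pi/7)\subset(0,\pi/2)$, so $\cos(\alpha)>0$. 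Adding these, $g'(\alpha)>7\sqrt{2}/2+3\sqrt{2}/2+0=5\sqrt{2}>0$ throughout the interval, so $g$ is strictly monotone there and can vanish at most once. Together with the sign change above this gives exactly one solution $\alpha_*$.

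I do not anticipate a genuine obstacle: the derivative bound is comfortably bounded away from $0$, and the only care required is in correctly identifying the monotonicity arc of cosine on which each of $7\alpha$, $3\alpha$, $\alpha$ lands. As an alternative worth recording, because it ties in with the later constructions, one can use sum-to-product and product-to-sum identities to factor $g(\alpha)=\sin(\alpha)\bigl(4\cos(5\alpha)\cos(\alpha)+1\bigr)=\sin(\alpha)\bigl(2\cos(6\alpha)+2\cos(4\alpha)+1\bigr)$; since $\sin(\alpha)>0$ on $(\pi/4,3\pi/10)$, the zeros of $g$ coincide with those of $h(\alpha)=2\cos(6\alpha)+2\cos(4\alpha)+1$, and a similar termwise sign check on $h'(\alpha)=-12\sin(6\alpha)-8\sin(4\alpha)$ (using $6\alpha\in(3\pi/2,12\pi/7)$ and $4\alpha\in(\pi,8\pi/7)$, where $\sin<0$) shows $h$ is strictly increasing with $h(\pi/4)=-1<0<h(2\pi/7)$, giving the same conclusion in a form that directly exhibits the relation $\cos(5\alpha)\cos(\alpha)=-1/4$ characterising $\alpha_*$.
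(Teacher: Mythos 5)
Your argument is essentially the paper's: both prove a sign change plus strict monotonicity of $g$ via termwise signs of $g'(\alpha)=7\cos(7\alpha)-3\cos(3\alpha)+\cos(\alpha)$, and your endpoint evaluations $g(\pi/4)=-\sqrt{2}/2<0$ and $g(2\pi/7)=\sin(2\pi/7)-\sin(\pi/7)>0$ are correct, as is the quantitative bound $g'>5\sqrt{2}$ on $(\pi/4,2\pi/7)$.

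There is, however, one gap relative to the statement: the lemma asserts that $g$ has a \emph{unique} zero as $\alpha$ ranges over the whole interval $(\pi/4,3\pi/10)$, and that this zero happens to lie in $(\pi/4,2\pi/7)$. You only establish monotonicity (hence uniqueness) on $(\pi/4,2\pi/7)$, so as written you have not excluded further zeros in $[2\pi/7,3\pi/10)$. This matters for the paper, since $\alpha_*$ serves as the left endpoint of the admissible range $(\alpha_*,3\pi/10)$ and the whole of that range must be free of further sign changes of $g$. The fix is one line and is exactly what the paper does: the termwise sign argument persists on all of $(\pi/4,3\pi/10)$, because $7\alpha\in(7\pi/4,21\pi/10)\subset(3\pi/2,5\pi/2)$ keeps $\cos(7\alpha)>0$ (you lose the bound $\cos(7\alpha)>\sqrt{2}/2$ once $7\alpha>2\pi$, but positivity survives), while $3\alpha\in(3\pi/4,9\pi/10)$ keeps $\cos(3\alpha)<0$ and $\cos(\alpha)>0$; hence $g'>0$ throughout, and combined with $g(\pi/4)<0<g(2\pi/7)\leq\ldots$, respectively $g(3\pi/10)=\sin(3\pi/10)>0$, the zero is unique in $(\pi/4,3\pi/10)$ and lies in $(\pi/4,2\pi/7)$. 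The same remark applies to your alternative factorisation $g(\alpha)=\sin(\alpha)\bigl(2\cos(6\alpha)+2\cos(4\alpha)+1\bigr)$, which is correct and a nice way to exhibit the characterisation $\cos(5\alpha_*)\cos(\alpha_*)=-1/4$, but whose monotonicity check you again restricted to $(\pi/4,2\pi/7)$; it too extends to $(\pi/4,3\pi/10)$ since $6\alpha\in(3\pi/2,9\pi/5)$ and $4\alpha\in(\pi,6\pi/5)$ both keep $\sin<0$.
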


\begin{proof}
We have that $g(\pi/4)=\sin(7\pi/4)<0$ and $g(3 \pi/10)=\sin(3\pi/10)>0$.
Since $7 \alpha \in (7 \pi/4, 21\pi/10)$, $3\alpha\in(3 \pi/4,9\pi/10)$ and
$\alpha\in (\pi/4, 3 \pi/10)$ it follows that
\begin{displaymath}
g'(\alpha)=7 \cos(7 \alpha)-3\cos(3 \alpha)+ \cos(\alpha)>0 \, .
\end{displaymath}
Existence and uniqueness of $\alpha_*\in (\pi/4, 3 \pi/10)$ now follow
from a variant of the intermediate value theorem.
For the remaining assertion observe that
$g(2\pi/7)=\sin(2\pi/7)-\sin(6\pi/7)=\sin(2\pi/7)-\sin(\pi/7)>0$.
\end{proof}

The angle $\alpha_*$ established by Lemma \ref{lem1} is the value of the base angle
where the geometry shown in Figure~\ref{fig1} starts to break down, since the generalised diagonal
hits the top vertex of the triangle, as we will show shortly.

\begin{lemma}\label{lem2}
Let $\alpha\in [\pi/4,3 \pi/10]$. The equation
\begin{equation}
g(\alpha,\phi)=\sin(6 \alpha+\phi)-\sin(2 \alpha+\phi)+\sin(\phi)=0 \label{eq3}
\end{equation}
has a unique solution $\phi=\phi_*(\alpha)$ in $(0,\pi)$.
\end{lemma}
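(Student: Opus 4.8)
The plan is to collapse $g(\alpha,\cdot)$ into a single sinusoid in $\phi$, after which both existence and uniqueness on $(0,\pi)$ become immediate. First I would apply the sum-to-product identity to the first two terms in \eqref{eq3},
\[
g(\alpha,\phi)=2\sin(2\alpha)\cos(4\alpha+\phi)+\sin\phi,
\]
and then expand $\cos(4\alpha+\phi)=\cos(4\alpha)\cos\phi-\sin(4\alpha)\sin\phi$ to rewrite this as $g(\alpha,\phi)=A(\alpha)\cos\phi+B(\alpha)\sin\phi$ with $A(\alpha)=2\sin(2\alpha)\cos(4\alpha)=\sin(6\alpha)-\sin(2\alpha)$ and $B(\alpha)=1-2\sin(2\alpha)\sin(4\alpha)$.

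The key point is a sign estimate: for $\alpha\in[\pi/4,3\pi/10]$ one has $2\alpha\in[\pi/2,3\pi/5]\subset(0,\pi)$, so $\sin(2\alpha)>0$, and $4\alpha\in[\pi,6\pi/5]\subset(\pi/2,3\pi/2)$, so $\cos(4\alpha)<0$; hence $A(\alpha)<0$, and in particular $(A(\alpha),B(\alpha))\neq(0,0)$, so we may write $g(\alpha,\phi)=R(\alpha)\sin(\phi+\psi(\alpha))$ with $R(\alpha)=\sqrt{A(\alpha)^2+B(\alpha)^2}>0$ and a phase $\psi(\alpha)$ determined by $R\sin\psi=A$, $R\cos\psi=B$. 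Since the zeros of $\phi\mapsto\sin(\phi+\psi)$ form an arithmetic progression with common difference $\pi$, the open interval $(0,\pi)$ contains at most one of them, which settles uniqueness. For existence I would invoke the intermediate value theorem on $[0,\pi]$: since $6\alpha\in[3\pi/2,9\pi/5]\subset(\pi,2\pi)$ we have $\sin(6\alpha)<0$, so $g(\alpha,0)=\sin(6\alpha)-\sin(2\alpha)<0$ while $g(\alpha,\pi)=\sin(2\alpha)-\sin(6\alpha)>0$; as $g(\alpha,\cdot)$ is continuous, there is a zero in $(0,\pi)$, which by the preceding is unique.

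The computations here are entirely routine trigonometry; the one step that genuinely requires attention is the sign bookkeeping establishing $A(\alpha)<0$ (equivalently $R(\alpha)>0$), since without it the reduction to a single sinusoid, and with it the uniqueness argument, would break down. It is worth noting that a naive monotonicity argument does not work: $\partial_\phi g=-2\sin(2\alpha)\sin(4\alpha+\phi)+\cos\phi$ is positive at $\phi=0$ and negative at $\phi=\pi$, so $g(\alpha,\cdot)$ is not monotone on $(0,\pi)$, which is precisely why the sinusoidal reduction is the natural route. Finally, this form identifies $\phi_*(\alpha)$ as the unique point of $(0,\pi)$ with $\phi_*(\alpha)+\psi(\alpha)\in\pi\mathbb{Z}$, equivalently $\cot(\phi_*(\alpha))=-A(\alpha)/B(\alpha)$ when $B(\alpha)\neq0$, a closed form that may be convenient in the later sections.
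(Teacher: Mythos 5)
Your proof is correct and is essentially the paper's argument in expanded form: the paper also notes $g(\alpha,0)=2\sin(2\alpha)\cos(4\alpha)<0$ and deduces existence and uniqueness from the fact that $g(\alpha,\cdot)$ is a first-order Fourier polynomial in $\phi$ (i.e.\ of the form $A\cos\phi+B\sin\phi$), which is exactly your single-sinusoid reduction. You simply spell out the details the paper leaves implicit (the amplitude--phase form, zeros spaced $\pi$ apart, and the sign change $g(\alpha,0)<0<g(\alpha,\pi)$), so there is nothing to correct.
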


\begin{proof}
We have $g(\alpha,0)=2 \sin(2 \alpha) \cos(4 \alpha)<0$.
Existence and uniqueness of the solution in $(0,\pi)$ follow from the
observation that $g(\alpha,\phi)$ is a Fourier polynomial in $\phi$
containing only the two first order terms.
\end{proof}

For the range $\alpha\in(\alpha_*,3\pi/10)$ of base angles, Lemma
\ref{lem2} defines a direction $\phi_*$ of the directional billiard
flow which determines the unfolding shown in Figure \ref{fig1}. This flow
will be instrumental in proving our main result.

\begin{lemma}\label{lem3}
Let $\alpha\in (\alpha_*, 3 \pi/10)$. The solution to (\ref{eq3}) obeys
\begin{equation}\label{eq4}
0<\phi_*<\alpha, \quad \alpha+\phi_* > \frac{\pi}{2},
\quad 3 \alpha+\phi_*>\pi, \quad 6 \alpha+\phi_*<2 \pi
< 7 \alpha+ \phi_*  \, .
\end{equation}
\end{lemma}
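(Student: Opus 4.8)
The plan is to exploit the elementary structure of $g(\alpha,\cdot)$ noted in the proof of Lemma~\ref{lem2}: for fixed $\alpha$ it is a Fourier polynomial in $\phi$ with only the two first-order terms, so $g(\alpha,\phi+\pi)=-g(\alpha,\phi)$. Since $g(\alpha,0)=2\sin(2\alpha)\cos(4\alpha)<0$ for $\alpha\in(\pi/4,3\pi/10)$ (here $2\alpha\in(\pi/2,3\pi/5)$ and $4\alpha\in(\pi,6\pi/5)$), this forces $g(\alpha,\pi)=-g(\alpha,0)>0$. Combining these two facts with continuity and the uniqueness of the zero $\phi_*\in(0,\pi)$ supplied by Lemma~\ref{lem2}, one obtains the sign pattern $g(\alpha,\phi)<0$ for $\phi\in[0,\phi_*)$ and $g(\alpha,\phi)>0$ for $\phi\in(\phi_*,\pi]$. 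Hence, for any $c\in(0,\pi)$, the relation $c<\phi_*$ is equivalent to $g(\alpha,c)<0$, and $c>\phi_*$ to $g(\alpha,c)>0$. Each inequality in \eqref{eq4} then reduces to evaluating $g$ at a suitably chosen argument and reading off the sign by a short trigonometric computation.

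Concretely, I would proceed as follows. For $\phi_*<\alpha$, note that $g(\alpha,\alpha)=\sin(7\alpha)-\sin(3\alpha)+\sin(\alpha)$ coincides with the function $g$ of Lemma~\ref{lem1}; since that function is strictly increasing on $(\pi/4,3\pi/10)$ and vanishes at $\alpha_*$, it is positive for $\alpha\in(\alpha_*,3\pi/10)$, so $\phi_*<\alpha$. This is the only step that genuinely requires $\alpha>\alpha_*$ rather than merely $\alpha>\pi/4$; the bound $\phi_*>0$ is part of Lemma~\ref{lem2}. For $\alpha+\phi_*>\pi/2$, take $c=\pi/2-\alpha\in(\pi/5,\pi/4)\subset(0,\pi)$ and use $g(\alpha,\pi/2-\alpha)=\cos(5\alpha)<0$, valid because $5\alpha\in(5\pi/4,3\pi/2)$. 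For $3\alpha+\phi_*>\pi$, take $c=\pi-3\alpha\in(\pi/10,\pi/4)$ and use $g(\alpha,\pi-3\alpha)=-\sin(\alpha)<0$. For $6\alpha+\phi_*<2\pi$, take $c=2\pi-6\alpha\in(\pi/5,\pi/2)$ and use $g(\alpha,2\pi-6\alpha)=\sin(4\alpha)-\sin(6\alpha)=-2\sin(\alpha)\cos(5\alpha)>0$, again since $\cos(5\alpha)<0$.

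The last inequality $7\alpha+\phi_*>2\pi$ I would split into two cases. If $\alpha\ge 2\pi/7$ then $7\alpha\ge 2\pi$ and the claim is immediate from $\phi_*>0$. If $\alpha\in(\alpha_*,2\pi/7)$ then $c:=2\pi-7\alpha$ lies in $(0,\pi/4)\subset(0,\pi)$ (using $\alpha>\alpha_*>\pi/4$), and one computes $g(\alpha,2\pi-7\alpha)=-\bigl(\sin(7\alpha)-\sin(5\alpha)+\sin(\alpha)\bigr)=-\sin(\alpha)\bigl(2\cos(6\alpha)+1\bigr)$, which is negative because $6\alpha\in(3\pi/2,12\pi/7)\subset(3\pi/2,2\pi)$ forces $\cos(6\alpha)>0$; hence $\phi_*>2\pi-7\alpha$. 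I do not anticipate any real difficulty: once the sign pattern above is established the argument is pure bookkeeping with standard product-to-sum identities, and the only mildly awkward point is the case distinction at $\alpha=2\pi/7$, which is needed precisely because the test argument $2\pi-7\alpha$ leaves the interval $(0,\pi)$ there.
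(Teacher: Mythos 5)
Your proof is correct, and the core mechanism is the same one the paper exploits: since $g(\alpha,\cdot)$ is a first-order Fourier polynomial in $\phi$ with $g(\alpha,0)<0$, the unique zero $\phi_*\in(0,\pi)$ from Lemma~\ref{lem2} separates a strict sign pattern, so each inequality in \eqref{eq4} reduces to a sign evaluation of $g$ at a test angle. Indeed, your test points $\pi/2-\alpha$ and $2\pi-6\alpha$ are exactly the paper's evaluations $\bar{g}(x,-x)=\sin(5x)<0$ and $\bar{g}(x,-6x)>0$ after its substitution $\alpha=3\pi/10+x$, $\phi_*=\pi/5+y$. Where you genuinely diverge is in the remaining inequalities: the paper obtains $3\alpha+\phi_*>\pi$ and $7\alpha+\phi_*>2\pi$ arithmetically from the two-sided bound $-x<y<-6x$ together with $\alpha>\pi/4$, whereas you perform fresh evaluations at $\pi-3\alpha$ and $2\pi-7\alpha$ (the latter forcing your case split at $\alpha=2\pi/7$, which the paper's route avoids entirely); and, more significantly, for $\phi_*<\alpha$ the paper proves via implicit differentiation that $\phi_*(\alpha)$ is strictly decreasing and combines this with $\phi_*(\alpha_*)=\alpha_*$, while you simply note $g(\alpha,\alpha)=\sin(7\alpha)-\sin(3\alpha)+\sin(\alpha)>0$ for $\alpha\in(\alpha_*,3\pi/10)$ by the monotonicity already established in the proof of Lemma~\ref{lem1}. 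Your treatment of this last point is more elementary (no calculus on the implicit function) and makes transparent that $\alpha>\alpha_*$ is needed only there; the paper's monotonicity of $\phi_*(\alpha)$ is a slightly stronger piece of information, though it is not used elsewhere, so nothing is lost by your shortcut. All of your trigonometric evaluations and the required membership of each test angle in $(0,\pi)$ check out.
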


\begin{proof}
Using the substitution
\begin{displaymath}
\alpha= 3 \pi/10+x, \quad \phi_*= \pi/5+y
\end{displaymath}
with $-\pi/20<x<0$ (equivalent to $\alpha\in (\pi/4,3\pi/10)$), equation (\ref{eq3}) reads
\begin{equation}
\bar{g}(x,y)=\sin(6x + y) + 2 \sin(x+y) \sin(3\pi/10+x)=0\,. \label{eq5}
\end{equation}
We have that
\begin{displaymath}
\bar{g}(x,-x)=\sin(5 x)<0, \quad \bar{g}(x,-6x)=2 \sin(-5x) \sin(3\pi/10+x) >0\,.
\end{displaymath}
It follows that $-x<y<-6x$ with $-\pi/20 < x < 0$, and therefore
\begin{align*}
\alpha+\phi_* &=  3 \pi/10+\pi/5 + x +y > \pi/2,\\
6 \alpha + \phi_* &=  9 \pi/5 + \pi/5 + 6x +y < 2 \pi,\\
3 \alpha+\phi_* &=  9 \pi/10 +3 x + \pi/5 +y = \pi + x+y+\pi/10+2x > \pi,\\
7 \alpha + \phi_* &=  6 \alpha +  \alpha + \phi_* >
3 \pi/2 + \pi/2 \, .
\end{align*}
Treating $y$ in (\ref{eq5}) as a function of $x$, implicit differentiation yields
\begin{align*}
0 =&  \frac{dy}{dx}\left(\cos(6x+y)+2 \cos(x+y)\sin(3\pi/10+x)\right)\\
&  +6 \cos(6x+y)+2 \cos(x+y) \sin(3\pi/10+x)+ 2 \sin(x+y) \cos(3\pi/10+x)\,.
\end{align*}
Since $-3\pi/10<6x+y<0$, $0<x+y<3\pi/10$, and $\pi/4<3\pi/10+x<3\pi/10$, all
trigonometric terms are positive and $dy/dx<0$. Hence the solution
$\phi_*(\alpha)$ is a strictly monotonic decreasing function for $\alpha\in
(\pi/4,3\pi/10)$. Since Lemma \ref{lem1} and \ref{lem2} imply $\phi_*(\alpha_*)=\alpha_*$
the final assertion follows.
\end{proof}
For the remainder of the paper we will refer to the value obtained in Lemma~\ref{lem1} as $\alpha_*$,
and for $\alpha \in (\alpha_*, 3\pi/10)$ we will write $\phi_* = \phi_*(\alpha)$,
omitting the dependence on the angle $\alpha$ where there is no risk of ambiguity.

\subsection{Generalised diagonal}\label{subsec2}
Next, we proceed to show the existence of a generalised diagonal
starting from the left endpoint of the base at angle $\phi_*$.
For this, we will ascertain that the formal recurrence equations (\ref{eq1}) and (\ref{eq2})
are satisfied by a given sequence of bouncing sides, angles, and spatial coordinates,
which therefore form a valid (that is, realisable) orbit.
We define the sequence of bouncing sides
\begin{equation}\label{eq6}
(m_t)_{0\leq t \leq 5}=(1,2,3,1,3,1)
\end{equation}
and introduce the sequence of angles
\begin{equation}
\begin{array}{lll}
\psi_0=\phi_*, \quad &
\psi_1=\pi-\alpha-\phi_*, \quad &
\psi_2=-\pi+3\alpha+\phi_*,  \\
\psi_3=2\pi-4 \alpha-\phi_*, \quad &
\psi_4=-\pi+5\alpha+\phi_*, \quad &
\psi_5=2\pi-6\alpha-\phi_*\, .
\end{array}
\label{eq7}
\end{equation}
It is straightforward to check that the angles (\ref{eq7})
together with (\ref{eq6}) satisfy the recurrence (\ref{eq1}). Furthermore, Lemma
\ref{lem3} yields the following result.

\begin{lemma}\label{lem4}
Let $\alpha\in (\alpha_*,3\pi/10)$.
The angles defined by (\ref{eq6}) and (\ref{eq7}) obey $0<\psi_t<\pi$,
$0\leq t \leq 5$.
\end{lemma}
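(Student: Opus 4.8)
The plan is to verify directly that each of the six angles $\psi_t$ defined in (\ref{eq7}) lies strictly between $0$ and $\pi$, using the inequalities collected in Lemma \ref{lem3}. Since $\psi_0=\phi_*$, the bound $0<\psi_0<\pi$ is immediate from $0<\phi_*<\alpha<3\pi/10<\pi$. For the remaining five angles I would treat the lower and upper bounds separately, reading each one off from the chain of inequalities (\ref{eq4}).

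\begin{proof}
The case $t=0$ is clear since $0<\phi_*<\alpha<\pi$ by Lemma \ref{lem3}. For $\psi_1=\pi-\alpha-\phi_*$, the inequality $\alpha+\phi_*>\pi/2$ gives $\psi_1<\pi/2<\pi$, while $\alpha<3\pi/10$ and $\phi_*<\alpha<3\pi/10$ yield $\alpha+\phi_*<3\pi/5<\pi$, hence $\psi_1>0$. For $\psi_2=-\pi+3\alpha+\phi_*$, the bound $3\alpha+\phi_*>\pi$ gives $\psi_2>0$, and since $3\alpha<9\pi/10$ and $\phi_*<3\pi/10$ we get $3\alpha+\phi_*<6\pi/5$, so $\psi_2<\pi/5<\pi$. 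For $\psi_3=2\pi-4\alpha-\phi_*$, we use $6\alpha+\phi_*<2\pi$: since $4\alpha+\phi_*<6\alpha+\phi_*<2\pi$ (as $\alpha>0$) we get $\psi_3>0$; for the upper bound, $\alpha>\alpha_*>\pi/4$ gives $4\alpha>\pi$, and $\phi_*>0$, so $4\alpha+\phi_*>\pi$ and $\psi_3<\pi$. For $\psi_4=-\pi+5\alpha+\phi_*$, the inequality $7\alpha+\phi_*>2\pi$ together with $\alpha<3\pi/10$ gives $5\alpha+\phi_*=7\alpha+\phi_*-2\alpha>2\pi-3\pi/5=7\pi/5$, hence $\psi_4>2\pi/5>0$; and $5\alpha+\phi_*<5\alpha+\alpha=6\alpha<9\pi/5<2\pi$ shows $\psi_4<\pi$. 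Finally, for $\psi_5=2\pi-6\alpha-\phi_*$, the inequality $6\alpha+\phi_*<2\pi$ gives $\psi_5>0$, and $7\alpha+\phi_*>2\pi$ together with $\alpha<3\pi/10$ gives $6\alpha+\phi_*>2\pi-\alpha>2\pi-3\pi/10$, so $\psi_5<3\pi/10<\pi$.
\end{proof}

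There is essentially no obstacle here: the lemma is a routine bookkeeping consequence of Lemma \ref{lem3}, and the only care needed is to pick, for each $\psi_t$, the right combination of the four inequalities in (\ref{eq4}) (occasionally supplemented by the elementary bounds $\alpha>\pi/4$ and $\alpha<3\pi/10$) to pin down both endpoints. The point of isolating this as a separate lemma is presumably that the bounds $0<\psi_t<\pi$ are exactly what is needed for the $\sin(\psi_{t+1})$ in the denominators of (\ref{eq2}) to be nonzero and of the correct sign, so that the spatial recurrence can be propagated and the sequence (\ref{eq6})--(\ref{eq7}) genuinely defines a realisable orbit; this will be used in the next step to construct the generalised diagonal.
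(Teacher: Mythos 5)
Your verification is correct, and it is exactly the argument the paper has in mind: the paper states Lemma \ref{lem4} without proof as an immediate consequence of Lemma \ref{lem3}, and your case-by-case bookkeeping with the inequalities (\ref{eq4}) together with $\pi/4<\alpha<3\pi/10$ and $\phi_*<\alpha$ simply fills in those omitted details. No gaps; each of the twelve bounds checks out.
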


Define, for $\delta\in \mathbb{R}$, the spatial coordinates
\begin{align}
\xi_0(\delta)&= \delta, \nonumber \\
\xi_1(\delta)&= (s_1-\delta) \frac{\sin(\psi_0)}{\sin(\psi_1)}, \nonumber \\
\xi_2(\delta)&= s_2 \frac{\sin(\psi_1)}{\sin(\psi_2)}
-(s_1-\delta)\frac{\sin(\psi_0)}{\sin(\psi_2)}, \nonumber \\
\xi_3(\delta)&=  s_3 \frac{\sin(\psi_2)}{\sin(\psi_3)}-
s_2 \frac{\sin(\psi_1)}{\sin(\psi_3)}
+(s_1-\delta)\frac{\sin(\psi_0)}{\sin(\psi_3)}, \nonumber \\
\xi_4(\delta)&= s_3-s_3 \frac{\sin(\psi_2)}{\sin(\psi_4)}+
s_2 \frac{\sin(\psi_1)}{\sin(\psi_4)}
-(s_1-\delta)\frac{\sin(\psi_0)}{\sin(\psi_4)}, \nonumber \\
\xi_5(\delta)&= s_3 \frac{\sin(\psi_2)}{\sin(\psi_5)}-
s_2 \frac{\sin(\psi_1)}{\sin(\psi_5)}
+(s_1-\delta)\frac{\sin(\psi_0)}{\sin(\psi_5)} \, . \label{eq8}
\end{align}
It is again straightforward to check that the expressions in (\ref{eq8}) together with
(\ref{eq6}) and (\ref{eq7}) obey the formal recurrence in (\ref{eq2}).

\begin{lemma}\label{lem5}
Let $\alpha\in (\alpha_*,3\pi/10)$. The coordinates defined in (\ref{eq6}), (\ref{eq7}),
and (\ref{eq8}) satisfy $\xi_0(0)=0$, $\xi_5(0)=1$, and $0<\xi_t(0)<s_{k_t}$, $1\leq t \leq 4$.
\end{lemma}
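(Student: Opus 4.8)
The plan is to verify the three claims in Lemma~\ref{lem5} directly from the explicit formulas in (\ref{eq8}), specialised to $\delta=0$, using the trigonometric identities for the angles (\ref{eq7}) together with the inequalities of Lemma~\ref{lem3}. First I would record the side lengths $s_1=1$, $s_2=s_3=1/(2\cos\alpha)$ and note that with $\delta=0$ the factor $(s_1-\delta)$ becomes $1$, so every $\xi_t(0)$ is a fixed rational expression in $\sin(\psi_j)$ and $\cos\alpha$. The claim $\xi_0(0)=0$ is immediate. For $\xi_5(0)=1$ the strategy is to substitute the angle values from (\ref{eq7}), use $\sin(\psi_1)=\sin(\alpha+\phi_*)$, $\sin(\psi_2)=\sin(3\alpha+\phi_*)$, $\sin(\psi_5)=\sin(6\alpha+\phi_*)$, and the defining relation (\ref{eq3}), namely $\sin(6\alpha+\phi_*)-\sin(2\alpha+\phi_*)+\sin(\phi_*)=0$, to collapse the numerator. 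Concretely, $\xi_5(0)=\bigl(\sin(\psi_2)-\sin(\psi_1)/(2\cos\alpha)\bigr)/(2\cos\alpha\,\sin(\psi_5))+\sin(\phi_*)/\sin(\psi_5)$; multiplying out and applying product-to-sum formulas ($2\cos\alpha\sin(\psi_2)=\sin(4\alpha+\phi_*)+\sin(2\alpha+\phi_*)$, etc.) should reduce the numerator to $2\cos\alpha\,\sin(6\alpha+\phi_*)$ precisely when (\ref{eq3}) holds, giving $\xi_5(0)=1$. I expect this to be the computational heart of the lemma, though it is routine.

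For the inequalities $0<\xi_t(0)<s_{k_t}$ with $1\le t\le4$, the plan is to treat each $t$ separately. By Lemma~\ref{lem4} all $\sin(\psi_t)>0$, so signs are controlled by the numerators, and I would use the sign and size information on the relevant angle combinations from Lemma~\ref{lem3}: $0<\phi_*<\alpha$, $\alpha+\phi_*>\pi/2$, $3\alpha+\phi_*>\pi$, $6\alpha+\phi_*<2\pi<7\alpha+\phi_*$, which also pin down $\sin(2\alpha+\phi_*)$, $\sin(4\alpha+\phi_*)$, $\sin(5\alpha+\phi_*)$ in terms of being positive or negative on the appropriate arcs. For $\xi_1(0)=\sin(\phi_*)/\sin(\alpha+\phi_*)$: positivity is clear, and $\xi_1(0)<s_2=1/(2\cos\alpha)$ is equivalent to $2\cos\alpha\sin\phi_*<\sin(\alpha+\phi_*)$, i.e. $\sin(\alpha+\phi_*)+\sin(\phi_*-\alpha)<\sin(\alpha+\phi_*)$, i.e. $\sin(\phi_*-\alpha)<0$, which holds since $0<\phi_*<\alpha$. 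The bounds on $\xi_2,\xi_3,\xi_4$ would be handled the same way: clear denominators, convert products $s_2\sin(\psi_j)$ and $s_3\sin(\psi_j)$ via $2\cos\alpha\sin\theta=\sin(\theta+\alpha)+\sin(\theta-\alpha)$, and reduce each inequality to a statement of the form $\sin(\text{combination})$ has a definite sign, which is then read off from the arc-location facts in Lemma~\ref{lem3}. It may be cleanest to also invoke the substitution $\alpha=3\pi/10+x$, $\phi_*=\pi/5+y$ from the proof of Lemma~\ref{lem3} (with $-\pi/20<x<0$, $-x<y<-6x$) so that every relevant argument of $\sin$ lands in an explicit interval on which its sign is obvious.

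The main obstacle will be bookkeeping rather than any conceptual difficulty: the expressions for $\xi_3(0)$ and $\xi_4(0)$ involve three terms with alternating signs and two different side lengths, so after clearing $2\cos\alpha$ one gets a sum of four or more sine terms whose combined sign must be shown to be correct, and getting the product-to-sum reductions to telescope cleanly (and in particular confirming that relation (\ref{eq3}) is exactly what makes $\xi_5(0)=1$ and does not spuriously enter the strict inequalities for $t\le4$) requires care. I would organise the computation by first proving a small auxiliary identity expressing each $\xi_t(0)$ as a single ratio $N_t/(2\cos\alpha\,\sin\psi_t)$ with $N_t$ a short trigonometric sum, then checking the five assertions one line each. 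A sanity check at the specific value $\alpha=\pi\sqrt3/6$ (where $\phi_*$ can be computed numerically) would guard against sign errors before writing the general argument.
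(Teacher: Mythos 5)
Your plan is essentially the paper's own proof: the paper multiplies (\ref{eq3}) by $2\cos\alpha$ to get the single identity $\sin(\psi_5)=s_3\sin(\psi_2)-s_2\sin(\psi_1)+s_1\sin(\psi_0)$, which gives $\xi_5(0)=1$ and is then reused (together with $\sin(7\alpha+\phi_*)>0$, $\sin(\psi_3)-\sin(\psi_5)=2\sin\alpha\cos(5\alpha+\phi_*)>0$, and $\sin(\alpha-\phi_*)>0$, i.e.\ your $\xi_1$ argument) to settle the bounds for $t=2,3,4$ — so, contrary to your parenthetical worry, (\ref{eq3}) legitimately enters the inequalities and in fact simplifies them. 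Just fix the sign slips $\sin(\psi_2)=-\sin(3\alpha+\phi_*)$ and $\sin(\psi_5)=-\sin(6\alpha+\phi_*)$ when carrying out the computation; otherwise the argument goes through as you describe.
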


\begin{proof}
The initial coordinate $\xi_0(0)=0$ is obvious.
From Lemma \ref{lem2} we have
\begin{align*}
0 &=  2 \cos(\alpha) \left(\sin(6 \alpha+ \phi_*)-\sin(2 \alpha+\phi_*)
+\sin(\phi_*)\right) \\
&=  2 \cos(\alpha) \sin(6 \alpha+ \phi_*)
-\sin(3 \alpha + \phi_*)-\sin(\alpha+\phi_*) + 2 \cos(\alpha) \sin(\phi_*)\,.
\end{align*}
With (\ref{eq7}) and $1=s_1=2 \cos(\alpha) s_{2/3}$ this reads
\begin{equation}
\sin(\psi_5) = s_3 \sin(\psi_2)-s_2 \sin(\psi_1)+
s_1 \sin(\psi_0) \, , \label{eq9}
\end{equation}
which implies $\xi_5(0)=1$.

Using (\ref{eq9}) and Lemma \ref{lem4} we have
\begin{displaymath}
\xi_4(0)=s_3-\sin(\psi_5)/\sin(\psi_4) <s_3 \, .
\end{displaymath}
Furthermore, by Lemma \ref{lem3} we have
\begin{align*}
\sin(\psi_4)-2 \cos(\alpha)\sin(\psi_5) =
\sin(7 \alpha + \phi_*) >0 \, ,
\end{align*}
which implies $\xi_4(0)>0$.

Again using (\ref{eq9}) and Lemma \ref{lem4} we have
\begin{displaymath}
\xi_3(0)=\sin(\psi_5)/\sin(\psi_3) > 0 \, ,
\end{displaymath}
and by Lemma \ref{lem3} and $2\alpha>\pi/2$ we obtain
\begin{displaymath}
\sin(\psi_3)-\sin(\psi_5)=2 \sin(\alpha) \cos(5 \alpha+\phi_*)>0 \, ,
\end{displaymath}
which implies $\xi_3(0)<1=s_1$.

Lemma \ref{lem3} implies
\begin{displaymath}
0<\sin(\alpha-\phi_*)=\sin(\alpha+\phi_*)-2 \cos(\alpha)\sin(\phi_*)
\end{displaymath}
so that, using the abbreviations (\ref{eq7}) we have
\begin{equation}\label{eq10}
\sin(\psi_0)<s_2 \sin(\psi_1) \, .
\end{equation}
Hence $\xi_2(0)>0$. Furthermore, (\ref{eq9}) and Lemma \ref{lem4} yield
\begin{displaymath}
0<s_3 -s_2 \frac{\sin(\psi_1)}{\sin(\psi_2)}+s_1 \frac{\sin(\psi_0)}{\sin(\psi_2)} \, ,
\end{displaymath}
which is equivalent to $\xi_2(0)<s_3$.

Finally, $\xi_1(0)>0$ is obvious, and $\xi_1(0)<s_2$ follows from (\ref{eq10}).
\end{proof}

Lemma \ref{lem4} and \ref{lem5} now yield the following conclusion.

\begin{lemma}\label{lem6}
Let $\alpha\in (\alpha_*,3\pi/10)$. With the definitions (\ref{eq6}), (\ref{eq7}),
and (\ref{eq8}) the sequence $([m_t,\psi_t,\xi_t(0)])_{0\leq t \leq 5}$ defines
a generalised diagonal.
\end{lemma}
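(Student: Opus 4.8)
The plan is simply to assemble Lemmas~\ref{lem4} and~\ref{lem5} into the definition of a generalised diagonal, taking care that nothing is left out. Recall that a generalised diagonal is a billiard orbit — a sequence of triplets obeying (\ref{eq:bm}) — which begins and ends at singular points; here we shall in fact obtain the slightly stronger statement that every intermediate bounce point is non-singular, so that the diagonal is regular apart from its two endpoints.

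First I would observe that the \emph{formal} part of the verification is already in place. By construction the angles (\ref{eq7}) and the side sequence (\ref{eq6}) satisfy the angle recurrence (\ref{eq1}) — one checks the ccw moves $1\to 2$ and $2\to 3$ against the first case and the cw moves $3\to 1$ and $1\to 3$ against the second case — and, given these, the spatial coordinates (\ref{eq8}) satisfy the position recurrence (\ref{eq2}) by construction as well. Hence, at the purely formal level, the sequence $([m_t,\psi_t,\xi_t(0)])_{0\le t\le 5}$ already solves the billiard map (\ref{eq:bm}).

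What upgrades this formal solution to a genuine, realisable orbit is Lemma~\ref{lem4}: since $\psi_t\in(0,\pi)$ for every $0\le t\le 5$, each angle lies in the admissible range, the sines appearing in (\ref{eq8}) are nonzero so the coordinates are well-defined, and each straight segment of the trajectory actually enters the interior of the triangle rather than running along or away from a side. Lemma~\ref{lem5} then supplies the boundary data: $\xi_0(0)=0$ places the initial point $[1,\psi_0,0]$ at the left endpoint of the base, and $\xi_5(0)=1=s_1$ places the terminal point (again on side $m_5=1$) at the right endpoint of the base, so both are singular; meanwhile $0<\xi_t(0)<s_{m_t}$ for $1\le t\le 4$ keeps the four intervening bounces strictly inside their respective sides, hence non-singular. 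Combining these facts, the sequence is a billiard orbit that starts and ends at corners of the triangle and is regular in between — precisely a generalised diagonal.

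Since Lemmas~\ref{lem1}–\ref{lem5} already carry all of the analytic weight (in particular the delicate sign bookkeeping of Lemma~\ref{lem5}, made tractable by the substitution used in Lemma~\ref{lem3}), there is essentially no obstacle left in Lemma~\ref{lem6} itself; the only point worth stating explicitly is the conceptual one, namely that a formal solution of (\ref{eq1})–(\ref{eq2}) together with the angle positivity of Lemma~\ref{lem4} and the position bounds of Lemma~\ref{lem5} genuinely defines a trajectory in the triangle, so that the three ingredients combine to give the claim.
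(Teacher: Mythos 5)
Your proposal is correct and is exactly the paper's own (implicit) argument: the paper offers no separate proof of Lemma~\ref{lem6} beyond noting that the formal recurrences (\ref{eq1})--(\ref{eq2}) hold by construction of (\ref{eq7})--(\ref{eq8}) and that ``Lemma \ref{lem4} and \ref{lem5} now yield the conclusion'', which is precisely your assembly of angle admissibility, interior bounce points for $1\le t\le 4$, and singular endpoints $\xi_0(0)=0$, $\xi_5(0)=s_1$. One small slip in your parenthetical verification instruction: since the side index is cyclic, the move $3\to 1$ is counter-clockwise, so the sequence $(1,2,3,1,3,1)$ consists of the ccw moves $1\to2$, $2\to3$, $3\to1$ (twice) and the single cw move $1\to3$; checking $\psi_3$ or $\psi_5$ against the cw case of (\ref{eq1}) would not reproduce (\ref{eq7}).
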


Lemma \ref{lem6} establishes the generalised diagonal, shown in Figure \ref{fig1} as a
dashed yellow line, by purely algebraic means. The generalised diagonal determines
the direction $\phi_*$ of the unfolding. If the base angle of the triangle,
$\alpha$, drops below the critical value $\alpha_*$ this connection ceases to exist.
At $\alpha=\alpha_*$ the generalised diagonal hits the top vertex of the first
triangle in the unfolding, as can be gleaned from Figure \ref{fig1}.
This geometric condition poses the major constraint on the existence of the generalised diagonal.

\subsection{Recurrent cylinder of length ten}\label{subsec3}

In this section we will establish the existence of a point $x_D$ on the base
of the triangle, such that all points in $(0, x_D) \times \{\phi_*\}$ share the same length-10 sequence of bouncing sides.
Using a symmetry of the triangle, this sequence will be shown to consist of the length-5 sequence (\ref{eq6}),
followed by a `mirrored' variant of the same sequence,
in a sense made precise below. Moreover we will observe
that the image of $(0, x_D) \times \{\phi_*\}$ under the 10th iteration
of the billiard map is $(1-x_D, 1) \times \{\phi_*\}$. The orbit of the point $(x_D, \phi_*)$ itself
will be singular, giving rise to a discontinuity of the induced map on the base.
We begin by defining
\begin{equation}\label{eq11}
x_D=1-\frac{\sin(2 \alpha+\phi_*)}{\sin(\phi_*)}\, .
\end{equation}

\begin{lemma}\label{lem6a}
Let $\alpha \in (\alpha_*, 3\pi/10)$. The quantity defined by (\ref{eq11}) obeys
$x_D \in (0,1)$ and
\begin{displaymath}
x_D=\frac{\sin(\psi_5)}{\sin(\psi_0)}=1-\frac{\cos(3 \alpha)}{2 \cos(\alpha) \cos(4 \alpha)} \, .
\end{displaymath}
\end{lemma}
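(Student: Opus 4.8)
The plan is to prove the three-part identity/bound by straightforward trigonometric manipulation, leaning on Lemma \ref{lem3} for the positivity needed to locate $x_D$ in $(0,1)$, and on equation (\ref{eq9}) from the proof of Lemma \ref{lem5} for the identity $x_D = \sin(\psi_5)/\sin(\psi_0)$.

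First I would establish the middle expression. Recall from (\ref{eq7}) that $\psi_0 = \phi_*$ and $\psi_5 = 2\pi - 6\alpha - \phi_*$, so $\sin(\psi_5) = -\sin(6\alpha + \phi_*) = \sin(-6\alpha-\phi_*)$; more usefully, $\sin(\psi_5) = \sin(2\pi - 6\alpha - \phi_*) = -\sin(6\alpha+\phi_*)$. On the other hand (\ref{eq11}) gives $1 - x_D = \sin(2\alpha+\phi_*)/\sin(\phi_*)$, i.e. $x_D = (\sin(\phi_*) - \sin(2\alpha+\phi_*))/\sin(\phi_*)$. Using Lemma \ref{lem2}, $\sin(6\alpha+\phi_*) - \sin(2\alpha+\phi_*) + \sin(\phi_*) = 0$, so $\sin(\phi_*) - \sin(2\alpha+\phi_*) = -\sin(6\alpha+\phi_*) = \sin(\psi_5)$. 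Hence $x_D = \sin(\psi_5)/\sin(\phi_*) = \sin(\psi_5)/\sin(\psi_0)$, which is the first claimed identity. (Alternatively, divide (\ref{eq9}) by $\sin(\psi_0)$ after noting $s_3\sin(\psi_2) - s_2\sin(\psi_1) = \xi_5(0)\sin(\psi_5) - s_1\sin(\psi_0)$; but the direct route via Lemma \ref{lem2} is cleanest.)

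Next I would convert this to the closed form $1 - \cos(3\alpha)/(2\cos(\alpha)\cos(4\alpha))$. Starting from $1 - x_D = \sin(2\alpha+\phi_*)/\sin(\phi_*)$, I need to show $\sin(2\alpha+\phi_*)/\sin(\phi_*) = \cos(3\alpha)/(2\cos(\alpha)\cos(4\alpha))$. Multiply out: this is equivalent to $2\cos(\alpha)\cos(4\alpha)\sin(2\alpha+\phi_*) = \cos(3\alpha)\sin(\phi_*)$. Expand the left side using product-to-sum: $2\cos(\alpha)\cos(4\alpha) = \cos(5\alpha) + \cos(3\alpha)$, so the left side is $(\cos(5\alpha)+\cos(3\alpha))\sin(2\alpha+\phi_*)$. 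The equation becomes $\cos(5\alpha)\sin(2\alpha+\phi_*) + \cos(3\alpha)\sin(2\alpha+\phi_*) - \cos(3\alpha)\sin(\phi_*) = 0$. Using $\cos A \sin B = \tfrac12(\sin(A+B) - \sin(A-B))$ on each term and $\cos(3\alpha)(\sin(2\alpha+\phi_*) - \sin(\phi_*)) = 2\cos(3\alpha)\cos(\alpha+\phi_*)\sin(\alpha) = \sin(\alpha)(\sin(4\alpha+\phi_*) + \sin(2\alpha-\phi_*) \cdot(\pm))$ — in any case, after expanding everything into a sum of sines the identity should reduce exactly to a multiple of $g(\alpha,\phi_*) = \sin(6\alpha+\phi_*) - \sin(2\alpha+\phi_*) + \sin(\phi_*) = 0$ from Lemma \ref{lem2}. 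Concretely I expect $\cos(5\alpha)\sin(2\alpha+\phi_*) = \tfrac12(\sin(7\alpha+\phi_*) - \sin(3\alpha-\phi_*))$ and the $\cos(3\alpha)$ terms to combine, and collecting all six resulting sine terms the combination telescopes to $\tfrac12$ times the Fourier polynomial in (\ref{eq3}) (possibly plus terms that cancel by odd symmetry), hence vanishes. This is the only place where a genuine (but routine) computation is needed.

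Finally, for $x_D \in (0,1)$: from $x_D = \sin(\psi_5)/\sin(\psi_0)$ and Lemma \ref{lem4}, both $\sin(\psi_0)$ and $\sin(\psi_5)$ are positive, so $x_D > 0$. For $x_D < 1$ it suffices that $\sin(\psi_5) < \sin(\psi_0)$, equivalently $\sin(2\alpha+\phi_*)/\sin(\phi_*) > 0$, which holds since $0 < \phi_* < \pi$ gives $\sin(\phi_*) > 0$ and $\alpha+\phi_* > \pi/2$ together with $6\alpha+\phi_* < 2\pi$ (Lemma \ref{lem3}) gives $\pi/2 < 2\alpha + \phi_* < 2\pi - 4\alpha < \pi$ since $4\alpha > \pi$, so $\sin(2\alpha+\phi_*) > 0$; hence $1 - x_D > 0$. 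I do not anticipate any real obstacle here; the main (mild) effort is bookkeeping the trigonometric expansion in the second step, and the one thing to be careful about is consistently using Lemma \ref{lem2} in its $2\cos(\alpha)$-multiplied form (as already appears at the start of the proof of Lemma \ref{lem5}) so that the $s_2 = s_3 = 1/(2\cos\alpha)$ factors line up with the $\cos(4\alpha)$ in the denominator.
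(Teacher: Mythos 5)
Your proposal is correct, and in outline it follows the paper: the identity $x_D=\sin(\psi_5)/\sin(\psi_0)$ is obtained exactly as in the paper's proof (substituting the relation of Lemma \ref{lem2} into (\ref{eq11})), and the bound $x_D<1$ via $\sin(2\alpha+\phi_*)>0$ is the paper's argument as well; for $x_D>0$ you use positivity of $\sin(\psi_0),\sin(\psi_5)$ from Lemma \ref{lem4} (legitimate and non-circular, since the ratio identity was already established), whereas the paper argues directly from $\sin(2\alpha+\phi_*)-\sin(\phi_*)=2\sin(\alpha)\cos(\alpha+\phi_*)<0$. The only genuinely different step is the closed form: you cross-multiply and reduce to the constraint (\ref{eq3}), and indeed the computation you left hedged does close up cleanly, namely
\begin{displaymath}
2\cos(\alpha)\cos(4\alpha)\sin(2\alpha+\phi)-\cos(3\alpha)\sin(\phi)
=\cos(\alpha)\bigl(\sin(6\alpha+\phi)-\sin(2\alpha+\phi)+\sin(\phi)\bigr),
\end{displaymath}
so the left-hand side vanishes at $\phi=\phi_*$ (the multiplier is $\cos\alpha$, not $\tfrac12$, and one should note $\cos(\alpha)\cos(4\alpha)\neq 0$ and $\sin(\phi_*)\neq 0$ before dividing). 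The paper instead solves (\ref{eq3}) for $\cos(\phi_*)/\sin(\phi_*)$ and substitutes, arriving at $\sin(2\alpha+\phi_*)/\sin(\phi_*)=(\sin(4\alpha)-\sin(2\alpha))/(\sin(6\alpha)-\sin(2\alpha))=\cos(3\alpha)/(2\cos(\alpha)\cos(4\alpha))$. Both routes are routine; yours isolates the verification in a single product-to-sum identity, the paper's avoids cross-multiplication at the cost of an intermediate quotient. No gap beyond finishing that one displayed identity.
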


\begin{proof}
Lemma \ref{lem3} implies $2 \alpha+\phi_*<\pi$ so that $x_D<1$. Furthermore
\begin{displaymath}
\sin(2 \alpha+\phi_*)-\sin(\phi_*)=2 \sin(\alpha)\cos(\alpha+\phi_*)<0
\end{displaymath}
so that $x_D>0$.
Using Lemma \ref{lem2} we have
\begin{displaymath}
x_D=\frac{-\sin(6 \alpha+\phi_*)}{\sin(\phi_*)}=\frac{\sin(\psi_5)}{\sin(\psi_0)} \, .
\end{displaymath}
Furthermore, (\ref{eq3}) yields
\begin{displaymath}
\left(\cos(6 \alpha)-\cos(2 \alpha)+1\right)  \sin(\phi_*)
+ \left(\sin(6 \alpha)-\sin(2 \alpha)\right) \cos(\phi_*)=0
\end{displaymath}
so that
\begin{align*}
\frac{\sin(2 \alpha + \phi_*)}{\sin(\phi_*)} &=
-\sin(2 \alpha)\frac{\cos(6 \alpha)-\cos(2 \alpha)+1}{\sin(6 \alpha)-
\sin(2 \alpha)}+\cos(2 \alpha)\\
&=  \frac{\sin(4 \alpha)-\sin(2 \alpha)}
{\sin(6 \alpha)-\sin(2 \alpha)}=\frac{\cos(3 \alpha)}{2 \cos(\alpha)
\cos(4 \alpha)}  \, .
\end{align*}
\end{proof}

\begin{lemma}\label{lem7}
Let $\alpha\in (\alpha_*,3\pi/10)$. The coordinates defined in
(\ref{eq6}), (\ref{eq7}),
and (\ref{eq8}) obey $0<\xi_t(x_D)<s_{m_t}$, $t=0,1$,
$\xi_2(x_D)=\xi_4(x_D)=s_3$, and $\xi_3(x_D)=\xi_5(x_D)=0$.
\end{lemma}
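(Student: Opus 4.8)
The plan is to evaluate the coordinate formulas (\ref{eq8}) at the specific value $\delta = x_D$ and simplify each expression using the algebraic identities already available, principally the trigonometric constraint (\ref{eq3}) defining $\phi_*$, the identity (\ref{eq9}) derived in the proof of Lemma~\ref{lem5}, and the key relation $x_D = \sin(\psi_5)/\sin(\psi_0)$ from Lemma~\ref{lem6a}. The structural observation driving everything is that $s_1 - x_D = \sin(2\alpha+\phi_*)/\sin(\phi_*) = -\sin(6\alpha+\phi_*)/\sin(\phi_*)$, so the recurring factor $(s_1-\delta)\sin(\psi_0)$ appearing throughout (\ref{eq8}) collapses at $\delta = x_D$ to the clean quantity $(s_1 - x_D)\sin(\psi_0) = -\sin(6\alpha+\phi_*) = \sin(\psi_5)$. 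Substituting this into each $\xi_t(x_D)$ should make the claimed equalities fall out by telescoping.

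Concretely, I would proceed term by term. First, $\xi_0(x_D) = x_D \in (0,1) = (0,s_{m_0})$ is immediate from Lemma~\ref{lem6a}, and $\xi_1(x_D) = (s_1 - x_D)\sin(\psi_0)/\sin(\psi_1) = \sin(\psi_5)/\sin(\psi_1)$, which is positive by Lemma~\ref{lem4}; the bound $\xi_1(x_D) < s_2$ amounts to $\sin(\psi_5) < s_2\sin(\psi_1)$, which I expect to reduce via (\ref{eq9}) and positivity of the remaining terms (or to follow from an inequality of the type already seen in Lemma~\ref{lem5}). Next, for $\xi_2(x_D)$, substituting $(s_1-x_D)\sin(\psi_0) = \sin(\psi_5)$ gives $\xi_2(x_D) = \big(s_2\sin(\psi_1) - \sin(\psi_5)\big)/\sin(\psi_2)$; invoking (\ref{eq9}) in the form $\sin(\psi_5) = s_3\sin(\psi_2) - s_2\sin(\psi_1) + s_1\sin(\psi_0)$ and using $s_2 = s_3$, $s_1 = 2\cos(\alpha)s_2$, this should simplify exactly to $s_3$. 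The same mechanism applied to $\xi_3(x_D)$ should yield $\big(-\sin(\psi_5) + \sin(\psi_5)\big)/\sin(\psi_3)$-type cancellation giving $0$; similarly $\xi_4(x_D) = s_3 - (s_3\sin(\psi_2) - s_2\sin(\psi_1) + (s_1-x_D)\sin(\psi_0))/\sin(\psi_4) = s_3 - \sin(\psi_5)/\sin(\psi_4) + (\text{correction})$, which using (\ref{eq9}) should collapse to $s_3$; and $\xi_5(x_D)$ should collapse to $0$.

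The main obstacle I anticipate is not any single identity but bookkeeping: making sure the signs and the $s_2$-versus-$s_3$ bookkeeping in the alternating sums of (\ref{eq8}) are handled correctly so that (\ref{eq9}) can be applied cleanly at each step, and verifying that the "left over" linear-in-$\delta$ pieces really do telescope rather than leaving a residual term. A secondary point requiring a little care is the strict inequalities for $t=0,1$: $\xi_0(x_D)\in(0,1)$ is Lemma~\ref{lem6a}, but $0 < \xi_1(x_D) < s_2$ needs the sign of $\sin(\psi_5)$ (positive, by Lemma~\ref{lem4}) together with an inequality comparing $\sin(\psi_5)$ with $s_2\sin(\psi_1)$; I would derive the latter from (\ref{eq9}) by noting all the dropped terms have a definite sign under Lemma~\ref{lem3}, or cite (\ref{eq10}) if it applies directly. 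Once these substitutions are carried out, the lemma follows, and geometrically it records that the orbit of the boundary point $x_D$ is singular: it strikes the vertex shared by sides $3$ and $1$ (hence $\xi_2 = \xi_4 = s_3$ and $\xi_3 = \xi_5 = 0$ simultaneously), which is exactly the discontinuity of the induced map to be exploited later.
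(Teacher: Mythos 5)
Your overall strategy is the same as the paper's (evaluate (\ref{eq8}) at $\delta=x_D$ and simplify with Lemma~\ref{lem6a}, identity (\ref{eq9}) and inequality (\ref{eq10})), but the central substitution on which your whole computation rests is wrong, and with it none of the claimed cancellations actually occur. Lemma~\ref{lem6a} states $x_D=\sin(\psi_5)/\sin(\psi_0)$, i.e.\ it is $x_D$, not $s_1-x_D$, that equals $-\sin(6\alpha+\phi_*)/\sin(\phi_*)$: from (\ref{eq3}) one has $\sin(2\alpha+\phi_*)=\sin(6\alpha+\phi_*)+\sin(\phi_*)$, so
\begin{equation*}
(s_1-x_D)\sin(\psi_0)=\sin(2\alpha+\phi_*)=\sin(\psi_0)-\sin(\psi_5),
\qquad x_D\sin(\psi_0)=\sin(\psi_5),
\end{equation*}
whereas you assert $(s_1-x_D)\sin(\psi_0)=\sin(\psi_5)$. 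With your value the ``telescoping'' fails: for instance you would get $\xi_2(x_D)=\bigl(s_2\sin(\psi_1)-\sin(\psi_5)\bigr)/\sin(\psi_2)$, which by (\ref{eq9}) equals $\bigl(2s_2\sin(\psi_1)-s_3\sin(\psi_2)-\sin(\psi_0)\bigr)/\sin(\psi_2)\neq s_3$ in general, and similarly $\xi_3(x_D)$ would come out as $\bigl(2\sin(\psi_5)-\sin(\psi_0)\bigr)/\sin(\psi_3)\neq 0$. Since every equality in the lemma was left at the level of ``should simplify/collapse'' rather than checked, this is a step that would fail as written, not mere bookkeeping.

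The fix is exactly the bookkeeping you flagged: with the correct substitution $(s_1-x_D)\sin(\psi_0)=\sin(\psi_0)-\sin(\psi_5)$, identity (\ref{eq9}) (with $s_1=1$, $s_2=s_3$) gives $\xi_2(x_D)=\bigl(s_2\sin(\psi_1)-\sin(\psi_0)+\sin(\psi_5)\bigr)/\sin(\psi_2)=s_3$, and likewise $\xi_3(x_D)=0$, $\xi_4(x_D)=s_3$, $\xi_5(x_D)=0$; moreover $\xi_1(x_D)=\bigl(\sin(\psi_0)-\sin(\psi_5)\bigr)/\sin(\psi_1)$, which is positive because $\sin(\psi_5)<\sin(\psi_0)$ (equivalent to $x_D<1$) and less than $s_2$ by (\ref{eq10}), while $t=0$ is Lemma~\ref{lem6a}. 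This corrected computation is precisely the paper's proof, which organises it slightly more economically by first observing $\xi_5(\delta)=1-\delta\sin(\psi_0)/\sin(\psi_5)$, hence $\xi_5(x_D)=0$, and then writing $\xi_2,\xi_3,\xi_4$ as affine functions of $\xi_5$.
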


\begin{proof}
Since $\xi_0(x_D)=x_D$, Lemma \ref{lem6a} yields the assertion for $t=0$.

Using (\ref{eq9}) and Lemma \ref{lem6a} we have
\begin{displaymath}
\xi_5(x_D)=1-x_D \frac{\sin(\psi_0)}{\sin(\psi_5)} =0 \, .
\end{displaymath}
The assertions $\xi_3(x_D)=0$ and $\xi_2(x_D)= \xi_4(x_D)=s_3$, follow
from the equalities $\xi_3(\delta)=\xi_5(\delta) \sin(\psi_5)/\sin(\psi_3)$,
$\xi_2(\delta)=s_3-\xi_5(\delta) \sin(\psi_5)/\sin(\psi_2)$,
and $\xi_4(\delta)=s_3-\xi_5(\delta)\sin(\psi_5)/\sin(\psi_4)$.
By Lemma \ref{lem6a} we have $\sin(\psi_0)>\sin(\psi_5)$, which implies
\begin{displaymath}
\xi_1(x_D)=\frac{\sin(\psi_0)}{\sin(\psi_1)}-\frac{\sin(\psi_5)}{\sin(\psi_1)}>0 \, .
\end{displaymath}
Finally, using (\ref{eq10}) we obtain
\begin{displaymath}
\xi_1(x_D)=\frac{\sin(\psi_0)}{\sin(\psi_1)}-\frac{\sin(\psi_5)}{\sin(\psi_1)}
<   \frac{\sin(\psi_0)}{\sin(\psi_1)} < s_2 \, .
\end{displaymath}
\end{proof}

Since the angles and spatial coordinates defined in (\ref{eq6}), (\ref{eq7}), and (\ref{eq8}) obey the formal recurrence scheme
determined by the billiard map (\ref{eq:bm}),
Lemmas \ref{lem4} and \ref{lem7} yield the following result.

\begin{lemma}\label{lem8}
Let $\alpha\in (\alpha_*,3\pi/10)$. Then the following holds. Given any $\delta \in (0,x_D)$,
the sequence $([m_t,\psi_t,\xi_t(\delta)])_{0\leq t \leq 5}$
with components defined by (\ref{eq6}), (\ref{eq7}), and (\ref{eq8})
constitutes a regular orbit of the billiard map (\ref{eq:bm}).
\end{lemma}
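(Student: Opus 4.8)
The plan is to show that the finite sequence $([m_t,\psi_t,\xi_t(\delta)])_{0\le t\le 5}$ is a genuine (realisable, regular) billiard orbit for every $\delta\in(0,x_D)$, by verifying the two defining requirements of Section~\ref{sec2}: (i) the triplets obey the formal recurrence of the billiard map~(\ref{eq:bm}), i.e.\ equations~(\ref{eq1}) and~(\ref{eq2}); and (ii) every point is non-singular, i.e.\ $0<\psi_t<\pi$ and $0<\xi_t(\delta)<s_{m_t}$ for all $t$. Requirement~(i) is already in hand: it was noted right after~(\ref{eq7}) that the $\psi_t$ and $m_t$ satisfy~(\ref{eq1}), and right after~(\ref{eq8}) that the $\xi_t(\delta)$ satisfy~(\ref{eq2}), and neither verification depends on the particular value of $\delta$. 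The angle condition in~(ii) is exactly Lemma~\ref{lem4}, which again is independent of $\delta$. So the only thing left is the spatial condition $0<\xi_t(\delta)<s_{m_t}$ for $1\le t\le 4$ (the $t=0$ and $t=5$ cases will follow from the endpoint analysis, see below), and this is where the argument must be made.

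The key observation is that, by~(\ref{eq8}), each $\xi_t(\delta)$ is an affine function of $\delta$ with a definite sign of slope: differentiating, $\xi_t'(\delta)=(-1)^{?}\sin(\psi_0)/\sin(\psi_t)$, whose sign is fixed because all $\sin(\psi_t)>0$ by Lemma~\ref{lem4}. Hence on the interval $\delta\in[0,x_D]$ the function $\xi_t$ is monotone, so it suffices to control its two endpoint values. Those are precisely what Lemmas~\ref{lem5} and~\ref{lem7} provide: at $\delta=0$ we have $0<\xi_t(0)<s_{m_t}$ for $1\le t\le 4$ (and $\xi_0(0)=0$, $\xi_5(0)=1=s_1=s_{m_5}$); at $\delta=x_D$ we have $0<\xi_t(x_D)<s_{m_t}$ for $t=0,1$, and $\xi_2(x_D)=\xi_4(x_D)=s_3$, $\xi_3(x_D)=\xi_5(x_D)=0$. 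Therefore for $t=1$ the two endpoint values both lie strictly inside $(0,s_2)$, so by monotonicity $\xi_1(\delta)\in(0,s_2)$ for all $\delta\in(0,x_D)$; for $t=2,3,4$ one endpoint is strictly interior and the other is a boundary value ($0$ or $s_{m_t}$), so the open segment between them is strictly interior, giving $\xi_t(\delta)\in(0,s_{m_t})$ for $\delta\in(0,x_D)$; and for $t=0,5$ we have $\xi_0(\delta)=\delta\in(0,x_D)\subset(0,1)$, while $\xi_5(\delta)$ runs strictly between $\xi_5(x_D)=0$ and $\xi_5(0)=1$, hence $\xi_5(\delta)\in(0,1)=(0,s_{m_5})$. (In each case the strictness for $\delta$ in the \emph{open} interval $(0,x_D)$ is automatic from affineness even if an endpoint value is a boundary point, since an affine function attaining $0$ or $s_{m_t}$ at only one endpoint cannot revisit that value in the interior.)

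Combining: for every $\delta\in(0,x_D)$ and every $0\le t\le 5$ we have $0<\psi_t<\pi$ and $0<\xi_t(\delta)<s_{m_t}$, so the sequence consists entirely of non-singular points; together with the already-verified recurrence relations this shows it is a regular orbit of the billiard map, proving Lemma~\ref{lem8}. I do not anticipate a genuine obstacle here — the lemma is essentially a packaging of the preceding three lemmas via an affine-interpolation argument — but the one point that requires a little care is making sure the strict inequalities survive at $\delta$ approaching the endpoints: this is handled by the remark that an affine function can meet a given value at most once unless it is constant (and none of the $\xi_t$ is constant in $\delta$, since $\sin(\psi_0)\ne 0$), so the interior values on $(0,x_D)$ are strictly separated from any boundary value attained only at an endpoint.
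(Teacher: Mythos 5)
Your proposal is correct and follows the same route the paper takes: Lemma \ref{lem8} is stated there without a separate proof, as an immediate consequence of the formal recurrence check after (\ref{eq7}) and (\ref{eq8}), the angle bounds of Lemma \ref{lem4}, and the endpoint values of the affine functions $\delta\mapsto\xi_t(\delta)$ supplied by Lemmas \ref{lem5} and \ref{lem7}. Your explicit monotone-interpolation argument simply spells out this implicit step (and correctly includes Lemma \ref{lem5}, which the paper's citation of only Lemmas \ref{lem4} and \ref{lem7} leaves tacit), so there is nothing to add.
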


The symmetry of the triangle has implications for the structure of orbits.
Reflecting an orbit at the symmetry axis of the triangle yields again an orbit.
In formal terms, this type of reflection is expressed as
$[k,\phi^{[k]},x^{[k]}] \mapsto [\bar{k},\pi-\phi^{[k]},s_k-x^{[k]}]$ where the
adjoint index $\bar{k}$ is given by $\bar{1}=1$, $\bar{2}=3$, $\bar{3}=2$.
Similarly, reversing the motion gives again an orbit. In formal terms,
the corresponding transformation reads $[k,\phi^{[k]},x^{[k]}] \mapsto [k,\pi-\phi^{[k]},x^{[k]}]$.
Combining both operations maps an orbit to another orbit.

\begin{lemma}\label{lem9}
If $([k_t,\phi_t^{[k_t]}, x_t^{[k_t]}])_{0\leq t \leq T}$ denotes a
finite regular orbit in a symmetric triangular billiard then $([\ell_t,\varphi_t^{[\ell_t]},
z_t^{[\ell_t]}])_{0\leq t \leq T}$ gives a finite regular orbit of the same length
where $\ell_t=\bar{k}_{T-t}$, $\varphi_t^{[\ell_t]}=\phi_{T-t}^{[k_{T-t}]}$ and
$z_t^{[\ell_t]}=s_{k_{T-t}} - x_{T-t}^{[k_{T-t}]}$. Here $\bar{k}$ denotes the adjoint
index defined by $\bar{1}=1$, $\bar{2}=3$, $\bar{3}=2$.
\end{lemma}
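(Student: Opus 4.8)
The plan is to observe that the map $([k_t,\phi_t^{[k_t]},x_t^{[k_t]}])_{0\le t\le T}\mapsto([\ell_t,\varphi_t^{[\ell_t]},z_t^{[\ell_t]}])_{0\le t\le T}$ in the statement is exactly the composition of the two elementary operations described immediately before it: the \emph{axis reflection} $[k,\phi^{[k]},x^{[k]}]\mapsto[\bar k,\pi-\phi^{[k]},s_k-x^{[k]}]$, applied pointwise and preserving the time order, followed by the \emph{motion reversal} $[k,\phi^{[k]},x^{[k]}]\mapsto[k,\pi-\phi^{[k]},x^{[k]}]$, applied pointwise and combined with reading the sequence in reverse order $t\mapsto T-t$. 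Composing the two, the angle is reflected twice, with the net effect $\pi-(\pi-\phi)=\phi$ cancelling, while the index is replaced by its adjoint and the position by its complement, which gives precisely $\ell_t=\bar k_{T-t}$, $\varphi_t^{[\ell_t]}=\phi_{T-t}^{[k_{T-t}]}$ and $z_t^{[\ell_t]}=s_{k_{T-t}}-x_{T-t}^{[k_{T-t}]}$. Hence it suffices to show that each of the two elementary operations sends finite regular orbits to finite regular orbits of the same length, and the lemma follows by composition.

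I would check the two operations by substituting into the recurrence relations (\ref{eq1})--(\ref{eq2}), although both also follow from the geometric facts that elastic reflection is time-reversible and that the isosceles triangle is invariant under the orientation-reversing reflection across the perpendicular bisector of its base. The only bookkeeping that requires care is the ccw/cw type of each step: reading a sequence of bounces in reverse order turns every ccw step into a cw step and vice versa, and passing to the adjoint index does the same, since $\overline{k+1}=\bar k-1$ and $\overline{k-1}=\bar k+1$; hence the composed map preserves step types, and each instance of (\ref{eq1})--(\ref{eq2}) for the image sequence reduces to the corresponding instance already satisfied by $([k_t,\phi_t^{[k_t]},x_t^{[k_t]}])_{0\le t\le T}$. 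The single point at which the hypothesis of a symmetric triangle is used is the identity $\gamma_{\bar k}=\gamma_k$ for all $k$, valid because $\bar 1=1$ and $\gamma_2=\gamma_3$; it ensures that the inner angle appearing in (\ref{eq1}) for the reflected step coincides with the one appearing for the original step.

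For regularity and length, note that the motion reversal leaves every position coordinate unchanged and only relabels the index set $\{0,\dots,T\}$, while the axis reflection replaces $x^{[k]}$ by $s_k-x^{[k]}$, which equals $0$ or $s_k$ exactly when $x^{[k]}$ does; hence a point of the image orbit is singular if and only if the corresponding point of the original orbit is singular, regular orbits are sent to regular orbits, and the length $T$ is preserved. The main obstacle is therefore not conceptual but the case analysis in the algebraic verification of (\ref{eq1})--(\ref{eq2}) after composition: one must split into the ccw and cw subcases at every step, track which of $\gamma_{k-1}$ and $\gamma_{k+1}$ enters, and confirm that the substitutions $\ell_t=\bar k_{T-t}$, $\varphi_t=\phi_{T-t}$, $z_t=s_{k_{T-t}}-x_{T-t}$ turn each equation into one that is already known to hold.
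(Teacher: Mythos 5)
Your proposal is correct and takes essentially the same route as the paper: the paper likewise introduces the two symmetry operations (axis reflection and motion reversal) and then proves the lemma by substituting the composed transformation into (\ref{eq1})--(\ref{eq2}), splitting into the ccw/cw cases and using $\overline{k\pm 1}=\bar{k}\mp 1$, $\gamma_{\bar{k}}=\gamma_k$ and $s_{\bar{k}}=s_k$, exactly the bookkeeping you describe. One minor caveat: the symmetry hypothesis enters not only via $\gamma_{\bar{k}}=\gamma_k$ but also via $s_{\bar{k}}=s_k$, which is needed in (\ref{eq2}) and to ensure $0<z_t^{[\ell_t]}<s_{\ell_t}$ for regularity, although for the isosceles triangle this identity is of course a consequence of $\gamma_2=\gamma_3$.
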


\begin{proof}
We first note the identity  $\overline{k\pm 1}=\bar{k}\mp 1$.
The symmetry of the triangle is equivalent to $\gamma_k=\gamma_{\bar{k}}$ and
$s_k=s_{\bar{k}}$. We consider a fixed time $t$, $0\leq t <T$.

Case A: Assume that the move $T-t-1 \rightarrow T-t$ in the original orbit is
counter-clockwise, that is, $k_{T-t}=k_{T-t-1}+1$. Then $\ell_t=\bar{k}_{T-t}=
\bar{k}_{T-t-1}-1=\ell_{t+1}-1$ (that is, the move $t\rightarrow t+1$ in the image orbit
is counter-clockwise as well).

Equation (\ref{eq1}) tells us that for the original angles we have
\begin{displaymath}
\phi_{T-t}^{[k_{T-t}]}= \pi - \phi_{T-t-1}^{[k_{T-t-1}]}-\gamma_{k_{T-t-1}-1} \, .
\end{displaymath}
Observing that
\begin{displaymath}
\gamma_{k_{T-t-1}-1}= \gamma_{\overline{k_{T-t-1}-1}}= \gamma_{\bar{k}_{T-t-1}+1}
=\gamma_{\ell_{t+1}+1}=\gamma_{\ell_t-1}\, ,
\end{displaymath}
we have
\begin{displaymath}
\varphi_t^{[\ell_t]}= \pi-\varphi_{t+1}^{[\ell_{t+1}]}-\gamma_{\ell_t-1}
\end{displaymath}
which is the angle dynamics of the billiard map for the image orbit.

Similarly, (\ref{eq2}) implies for the spatial coordinates of the original orbit that
\begin{displaymath}
x_{T-t}^{[k_{T-t}]}=\left(s_{k_{T-t-1}}-x_{T-t-1}^{[k_{T-t-1}]}\right)
\frac{\sin(\phi_{T-t-1}^{[k_{T-t-1}]})}{\sin(\phi_{T-t}^{[k_{T-t}]}) }
\end{displaymath}
so that
\begin{displaymath}
s_{k_{T-t}}-z_t^{[\ell_t]}=z_{t+1}^{[\ell_{t+1}]}
\frac{\sin(\varphi_{t+1}^{[\ell_{t+1}]})}{\sin(\varphi_t^{[\ell_t]})} \, .
\end{displaymath}
Recalling that $s_{k_{T-t}}=s_{\bar{k}_{T-t}}=s_{\ell_t}$ we obtain the
position dynamics of the billiard map for the image orbit.

Case B: The proof in case the move $T-t-1 \rightarrow T-t$ in the original orbit is
clockwise, that is, $k_{T-t}=k_{T-t-1}-1$, is similar.
\end{proof}

The symmetry allows us to extend the regular orbit derived in Lemma \ref{lem8}
to a recurrent orbit with $\phi_0^{[k_0]}=\phi_T^{[k_T]}$.

\begin{proposition}\label{prop1}
Let $\alpha\in (\alpha_*,3\pi/10)$. For any $\delta \in (0,x_D)$
there exists a recurrent regular orbit of length 10
given by $([k_t, \phi_t^{[k_t]}, x_t^{[k_t]}])_{0\leq t \leq 10}$
with initial condition $[k_0,\phi_0^{[k_0]},x_0^{[k_0]}]=[1,\phi_*,\delta]$ and
endpoint $[k_{10},\phi_{10}^{[k_{10}]},x_{10}^{[k_{10}]}]=[1,\phi_*,\delta+1-x_D]$.
The explicit expression for the orbit is given by $k_t=m_t$, $\phi_t^{[k_t]}=\psi_t$, and
$x_t^{[k_t]}=\xi_t(\delta)$ for $0\leq t \leq 5$ and $k_t=\bar{m}_{10-t}$,
$\phi_t^{[k_t]}=\psi_{10-t}$, and $x_t^{[k_t]}=s_{\bar{m}_{10-t}}-\xi_{10-t}(x_D-\delta)$
for $6\leq t \leq 10$.
\end{proposition}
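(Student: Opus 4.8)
The plan is to realise the length-10 orbit as the concatenation of two length-5 orbits glued along the base. For $0\le t\le 5$ I would take the orbit furnished by Lemma~\ref{lem8} with parameter $\delta$, namely $([m_t,\psi_t,\xi_t(\delta)])_{0\le t\le 5}$. Since $\delta\in(0,x_D)$ implies $x_D-\delta\in(0,x_D)$, Lemma~\ref{lem8} also applies with parameter $x_D-\delta$, giving a second regular orbit $([m_t,\psi_t,\xi_t(x_D-\delta)])_{0\le t\le 5}$; applying the symmetry operation of Lemma~\ref{lem9} with $T=5$ to this second orbit produces another regular orbit of the same length, which after the reindexing $t=5+s$ reads $([\bar m_{10-t},\psi_{10-t},s_{\bar m_{10-t}}-\xi_{10-t}(x_D-\delta)])_{5\le t\le 10}$, where I used $s_k=s_{\bar k}$. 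This is exactly the claimed closed form for $6\le t\le 10$, and the point of the construction is that the base is fixed by the reflection, $\bar 1=1$, so the reversed-reflected piece again starts and ends on side~$1$.

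First I would check that the two pieces agree at the gluing time $t=5$. The first piece gives $[m_5,\psi_5,\xi_5(\delta)]=[1,\psi_5,\xi_5(\delta)]$ and the second gives $[\bar m_5,\psi_5,s_{m_5}-\xi_5(x_D-\delta)]=[1,\psi_5,1-\xi_5(x_D-\delta)]$, so agreement is exactly the identity $\xi_5(\delta)+\xi_5(x_D-\delta)=1$. This is immediate once one notes from (\ref{eq8}) that $\delta\mapsto\xi_5(\delta)$ is affine; together with the boundary values $\xi_5(0)=1$ (Lemma~\ref{lem5}) and $\xi_5(x_D)=0$ (Lemma~\ref{lem7}) this forces $\xi_5(\delta)=1-\delta/x_D$, whence the identity. (Equivalently, one reads $\sin(\psi_0)/\sin(\psi_5)=1/x_D$ off Lemma~\ref{lem6a}.)

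It then remains to argue that the spliced sequence $([k_t,\phi_t^{[k_t]},x_t^{[k_t]}])_{0\le t\le 10}$ is a genuine regular orbit: every consecutive pair with $0\le t\le 4$ obeys the billiard map because the first piece does, every consecutive pair with $5\le t\le 9$ obeys it because the (reindexed) second piece does, and all eleven points are non-singular since those with $0\le t\le 5$ are (Lemma~\ref{lem8}) and those with $5\le t\le 10$ are (Lemma~\ref{lem9} preserves regularity), the junction point $[1,\psi_5,1-\delta/x_D]$ being non-singular because $\delta/x_D\in(0,1)$. Reading off the ends, $[k_0,\phi_0^{[k_0]},x_0^{[k_0]}]=[m_0,\psi_0,\xi_0(\delta)]=[1,\phi_*,\delta]$ and $[k_{10},\phi_{10}^{[k_{10}]},x_{10}^{[k_{10}]}]=[\bar m_0,\psi_0,s_{\bar m_0}-\xi_0(x_D-\delta)]=[1,\phi_*,\delta+1-x_D]$, while $\phi_0^{[k_0]}=\phi_{10}^{[k_{10}]}=\phi_*$ gives recurrence. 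The only substantive ingredients are the gluing identity $\xi_5(\delta)+\xi_5(x_D-\delta)=1$ and the essentially formal fact that two regular orbits sharing a non-singular endpoint concatenate to a regular orbit; I expect the main nuisance to be keeping the index conventions of Lemma~\ref{lem9} straight — in particular $\overline{k\pm1}=\bar k\mp1$ and the shift $t=5+s$ — so that the asserted formula for $6\le t\le 10$ is matched on the nose.
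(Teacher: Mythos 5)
Your proposal is correct and follows essentially the same route as the paper: apply Lemma~\ref{lem8} with parameters $\delta$ and $x_D-\delta$, reverse--reflect the second piece via Lemma~\ref{lem9}, glue along the base using the identity $s_1-\xi_5(x_D-\delta)=\xi_5(\delta)$ (which the paper derives from $\xi_5(\delta)=1-\delta\sin(\psi_0)/\sin(\psi_5)$ and Lemma~\ref{lem6a}, equivalent to your affine-interpolation argument), and concatenate. The only cosmetic difference is your derivation of the gluing identity from the boundary values $\xi_5(0)=1$, $\xi_5(x_D)=0$ rather than the explicit formula, which amounts to the same computation.
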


\begin{proof}
Let $\delta \in (0,x_D)$. Lemma \ref{lem8} provides us with the regular
orbit of length 5, $([m_t,\psi_t,\xi_t(\delta)])_{0\leq t \leq 5}$, with
initial condition $[1,\phi_*,\delta]$ and endpoint $[1,\psi_5,\xi_5(\delta)]$.
Replacing $\delta$ by $x_D-\delta$, Lemma \ref{lem8} yields the regular length-5 orbit given by
$([m_t,\psi_t,\xi_t(x_D-\delta)])_{0\leq t \leq 5}$.
Applying Lemma \ref{lem9} we obtain the regular orbit
$([\bar{m}_{5-t},\psi_{5-t},s_{\bar{m}_{5-t}}-\xi_{5-t}(x_D-\delta)])_{0\leq t \leq 5}$
with initial condition $[1,\psi_5,s_1-\xi_5(x_D-\delta)]$ and endpoint
$[1,\phi_*,1-x_D+\delta]$. Recalling that (\ref{eq8}) and (\ref{eq9})
imply $\xi_5(\delta)=1-\delta \sin(\psi_0)/\sin(\psi_5)$ and using
Lemma \ref{lem6a}, we obtain $s_1-\xi_5(x_D-\delta)=\xi_5(\delta)$.
The assertions of the proposition follow by transitivity of orbits of the billiard map.
\end{proof}

Proposition \ref{prop1} establishes by purely algebraic means
the recurrent cylinder of length~$10$ depicted in Figure \ref{fig1}
as the red parallelogram. Due to the symmetry of the underlying triangle, the
entire structure shown in Figure \ref{fig1}
is point symmetric about the midpoint of this parallelogram.
Hence the top and bottom side of the parallelogram are guaranteed to be parallel
ensuring recurrence of the scattering angle of the billiard dynamics.
The same symmetry ensures that the right side of the parallelogram also
contains a generalised diagonal and links up with another periodic cylinder.
The width of the parallelogram, that is, the distance between the two long sides
is essentially given by (\ref{eq11}). Both sides approach each other
when increasing the base angle $\alpha$, and the width of the parallelogram
vanishes at the upper
critical angle $\alpha=3\pi/10$. Our algebra ensures that no further geometric
obstruction, that is, no further vertex, appears within the cylinder of period 10,
as can be gleaned from Figure \ref{fig1}.

\subsection{Recurrent cylinder of length four}\label{subsec4}

In an analogous way we can define a recurrent cylinder of length 4 for initial
conditions $x_0^{[1]}$ in $(x_D,1)$. For that purpose we define
\begin{equation}\label{eq12}
(\ell_t)_{0\leq t \leq 2} = (1,2,1),
\end{equation}
\begin{equation}\label{eq13}
\theta_0=\phi_*, \quad \theta_1= \pi-\alpha-\phi_*, \quad \theta_2= 2 \alpha+\phi_*,
\end{equation}
\begin{align}\label{eq14}
\eta_0(\delta) &=  \delta, \nonumber \\
\eta_1(\delta) &=  (s_1-\delta) \frac{\sin(\theta_0)}{\sin(\theta_1)},\nonumber \\
\eta_2(\delta) &= s_1-(s_1-\delta) \frac{\sin(\theta_0)}{\sin(\theta_2)}\,.
\end{align}
Formally, the angles and spatial coordinates defined in (\ref{eq12}), (\ref{eq13}), and (\ref{eq14}) obey the recursion scheme
of the billiard map (\ref{eq:bm}). In a similar vein to Lemma~\ref{lem8}
we have the following result.

\begin{lemma}\label{lem10}
Let $\alpha\in (\alpha_*,3\pi/10)$. Then the following holds. Given any $\delta \in (x_D,1)$, the sequence
$([\ell_t,\theta_t,\eta_t(\delta)])_{0\leq t \leq 2}$ with components defined by (\ref{eq12}), (\ref{eq13}), and (\ref{eq14})
constitutes a regular orbit of the billiard map (\ref{eq:bm}).
\end{lemma}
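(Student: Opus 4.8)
The plan is to follow the proof of Lemma~\ref{lem8} essentially verbatim, adapted to the shorter bouncing pattern $(\ell_t)=(1,2,1)$ and to the parameter interval $\delta\in(x_D,1)$. Since the angles and positions in (\ref{eq13}) and (\ref{eq14}) have been arranged to satisfy the formal recurrence scheme of (\ref{eq:bm}), the entire content of the lemma is the assertion that the three triplets $[\ell_t,\theta_t,\eta_t(\delta)]$ are genuine non-singular billiard points, i.e. that $\theta_t\in(0,\pi)$ and $0<\eta_t(\delta)<s_{\ell_t}$ for $t=0,1,2$.

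I would first handle the angles, which is immediate from Lemma~\ref{lem3}: one has $\theta_0=\phi_*\in(0,\alpha)\subset(0,\pi)$; $\theta_1=\pi-(\alpha+\phi_*)$ with $\pi/2<\alpha+\phi_*<2\alpha<3\pi/5$, so $\theta_1\in(0,\pi)$; and $\theta_2=2\alpha+\phi_*$ with $\pi/2<2\alpha$ and $2\alpha+\phi_*<\pi$ (as already noted in the proof of Lemma~\ref{lem6a}), so $\theta_2\in(0,\pi)$. In particular all three sines $\sin\theta_t$ are strictly positive, which makes the quotients in (\ref{eq14}) well defined and their signs controllable.

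For the positions I would exploit that each $\eta_t(\delta)$ is affine in $\delta$ with a sign-definite slope, so that it suffices to inspect the endpoint values $\delta=x_D$ and $\delta=1$. The first coordinate is $\eta_0(\delta)=\delta\in(0,1)=(0,s_1)$ because $\delta\in(x_D,1)$ and $x_D\in(0,1)$ by Lemma~\ref{lem6a}. For $\eta_1$ I would observe that its defining formula coincides with that of $\xi_1$ (since $\theta_0=\psi_0$, $\theta_1=\psi_1$), that $\xi_1$ is strictly decreasing in $\delta$, and that $\xi_1(x_D)<s_2$ by Lemma~\ref{lem7}; hence $0<\eta_1(\delta)=\xi_1(\delta)<\xi_1(x_D)<s_2=s_{\ell_1}$ for all $\delta\in(x_D,1)$. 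For $\eta_2$ I would note that $\eta_2$ is strictly increasing in $\delta$ with $\eta_2(1)=s_1$, while, using the identity $x_D=1-\sin(2\alpha+\phi_*)/\sin(\phi_*)$, one gets $\eta_2(x_D)=s_1-(s_1-x_D)\sin(\theta_0)/\sin(\theta_2)=1-1=0$; therefore $0<\eta_2(\delta)<s_1=s_{\ell_2}$ for all $\delta\in(x_D,1)$. Combining the angle check, the three position checks, and the formal recurrence yields the desired regular orbit.

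The one step requiring a small amount of attention is the upper bound $\eta_1(\delta)<s_2$: this is exactly the place where the hypothesis $\delta>x_D$ (rather than $\delta<x_D$, as in Proposition~\ref{prop1}) is used, via the monotonicity of $\xi_1$ and the endpoint estimate of Lemma~\ref{lem7}. If one prefers a self-contained verification, it follows from the identity $2\cos(\alpha)\sin(2\alpha+\phi_*)=\sin(3\alpha+\phi_*)+\sin(\alpha+\phi_*)$ together with $\sin(3\alpha+\phi_*)<0$, the latter being guaranteed by $\pi<3\alpha+\phi_*<6\alpha+\phi_*<2\pi$ from Lemma~\ref{lem3}. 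Apart from this, the argument is the routine short-pattern analogue of Lemmas~\ref{lem4}--\ref{lem8}, with the roles of the two endpoints $0$ and $x_D$ of the admissible parameter range exchanged.
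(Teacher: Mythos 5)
Your proposal is correct and follows essentially the same route as the paper: angles via Lemma~\ref{lem3}, then endpoint evaluations $\eta_2(x_D)=0$, $\eta_2(1)=s_1$, $\eta_1(1)=0$ together with the bound $\eta_1(x_D)<s_2$ and affinity in $\delta$. Your primary justification of $\eta_1(x_D)<s_2$ via $\eta_1=\xi_1$ and Lemma~\ref{lem7} is a legitimate shortcut, and your ``self-contained'' alternative using $2\cos(\alpha)\sin(2\alpha+\phi_*)=\sin(3\alpha+\phi_*)+\sin(\alpha+\phi_*)$ with $\sin(3\alpha+\phi_*)<0$ is exactly the computation the paper uses.
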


\begin{proof}
By Lemma \ref{lem3} we have that $0<\theta_t<\pi$ for $0\leq t \leq 2$.
Fixing $\delta \in (x_D,1)$, Lemma \ref{lem6a} yields $0<\eta_0(\delta)<1$.
We next observe that $\eta_2(1)=1$, while (\ref{eq11}) yields $\eta_2(x_D)=0$, and hence
$0<\eta_2(\delta)<1$.

Finally, we have $\eta_1(1)=0$; furthermore, since
\begin{displaymath}
2\cos(\alpha) \sin(2\alpha + \phi_*) - \sin(\alpha+\phi_*)
=\sin(3\alpha+\phi_*)<0
\end{displaymath}
by Lemma \ref{lem3}, we have
\begin{displaymath}
\eta_1(x_D)=\frac{\sin(2\alpha+\phi_*)}{\sin(\alpha+\phi_*)}<\frac{1}{2\cos(\alpha)},
\end{displaymath}
and so $0<\eta_1(x_D)<s_2$, which yields $0<\eta_1(\delta)<s_2$ for $\delta\in(x_D,1)$.
\end{proof}

Again employing the symmetry of the triangle yields the following result.

\begin{proposition}\label{prop2}
Let $\alpha\in (\alpha_*,3\pi/10)$. For any $\delta \in (x_D,1)$
there exists a recurrent regular orbit of length 4 given by
$([k_t, \phi_t^{[k_t]}, x_t^{[k_t]}])_{0\leq t \leq 4}$
with initial condition $[k_0,\phi_0^{[k_0]},x_0^{[k_0]}]=[1,\phi_*,\delta]$  and
endpoint $[k_{4},\phi_{4}^{[k_{4}]},x_{4}^{[k_{4}]}]=[1,\phi_*,\delta-x_D]$.
The explicit expression for the orbit is given by $k_t=\ell_t$, $\phi_t^{[k_t]}=\theta_t$, and
$x_t^{[k_t]}=\eta_t(\delta)$ for $0\leq t \leq 2$ and $k_t=\bar{\ell}_{4-t}$,
$\phi_t^{[k_t]}=\theta_{4-t}$, and $x_t^{[k_t]}=s_{\bar{\ell}_{4-t}}-\eta_{4-t}(1+x_D-\delta)$
for $3\leq t \leq 4$.
\end{proposition}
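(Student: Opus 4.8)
The plan is to mirror the proof of Proposition~\ref{prop1} almost verbatim, using the length-2 data (\ref{eq12})--(\ref{eq14}) in place of the length-5 data (\ref{eq6})--(\ref{eq8}), and Lemma~\ref{lem10} in place of Lemma~\ref{lem8}. First I would fix $\delta\in(x_D,1)$ and invoke Lemma~\ref{lem10} to obtain the regular orbit of length $2$, namely $([\ell_t,\theta_t,\eta_t(\delta)])_{0\le t\le 2}$, which starts at $[1,\phi_*,\delta]$ and ends at $[\ell_2,\theta_2,\eta_2(\delta)]=[1,2\alpha+\phi_*,\eta_2(\delta)]$. Next I would apply Lemma~\ref{lem10} again with $\delta$ replaced by $1+x_D-\delta$; one has to check that this argument lies in $(x_D,1)$, which is immediate since $\delta\in(x_D,1)$ forces $1+x_D-\delta\in(x_D,1)$. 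This produces the regular length-2 orbit $([\ell_t,\theta_t,\eta_t(1+x_D-\delta)])_{0\le t\le 2}$.

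Then I would apply the symmetry Lemma~\ref{lem9} (with $T=2$) to this second orbit, obtaining the regular orbit $([\bar\ell_{2-t},\theta_{2-t},s_{\bar\ell_{2-t}}-\eta_{2-t}(1+x_D-\delta)])_{0\le t\le 2}$, whose initial point is $[\bar\ell_2,\theta_2,s_1-\eta_2(1+x_D-\delta)]$ and whose endpoint is $[\bar\ell_0,\theta_0,s_1-\eta_0(1+x_D-\delta)]=[1,\phi_*,\delta-x_D]$. Since $\bar 1=1$, the side labels $\bar\ell_{2-t}$ reduce to $(1,2,1)$ again, matching the claimed $k_t=\bar\ell_{4-t}$ for $3\le t\le 4$ once the two length-2 pieces are concatenated at $t=2$. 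The concatenation is legitimate by transitivity of orbits of the billiard map provided the two pieces agree at the joint, i.e.\ one must verify
\begin{displaymath}
[\ell_2,\theta_2,\eta_2(\delta)] = [\bar\ell_2,\theta_2,s_1-\eta_2(1+x_D-\delta)],
\end{displaymath}
which amounts to the single scalar identity $\eta_2(\delta)=1-\eta_2(1+x_D-\delta)$. Using the explicit formula $\eta_2(\delta)=s_1-(s_1-\delta)\sin(\theta_0)/\sin(\theta_2)$ from (\ref{eq14}) together with $s_1=1$, this reduces to $(1-\delta)\sin(\phi_*)/\sin(2\alpha+\phi_*)=(\delta-x_D)$, i.e.\ to $x_D=1-\sin(2\alpha+\phi_*)/\sin(\phi_*)$, which is precisely the definition (\ref{eq11}) of $x_D$.

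The main obstacle, such as it is, is bookkeeping rather than mathematics: one must keep the index reversal $t\mapsto T-t$, the adjoint map $k\mapsto\bar k$, and the affine substitution $\delta\mapsto 1+x_D-\delta$ all straight simultaneously, and confirm that the resulting labels and coordinates coincide with those asserted in the statement for $0\le t\le 4$. For $0\le t\le 2$ this is just Lemma~\ref{lem10}; for $3\le t\le 4$ it is the symmetric piece produced by Lemma~\ref{lem9}; and the endpoint value $\delta-x_D$ follows from evaluating $s_1-\eta_0(1+x_D-\delta)=1-(1+x_D-\delta)=\delta-x_D$. Finally one notes that regularity of the concatenated orbit holds because each of the two length-2 pieces is regular (Lemma~\ref{lem10}) and the joint point $[1,2\alpha+\phi_*,\eta_2(\delta)]$ is non-singular by the strict inequality $0<\eta_2(\delta)<1$ established in Lemma~\ref{lem10}; hence the full length-4 sequence is a recurrent regular orbit with the stated initial condition, endpoint, and explicit description, completing the proof.
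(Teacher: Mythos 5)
Your argument is essentially the paper's own proof: apply Lemma~\ref{lem10} at $\delta$ and at $1+x_D-\delta$, apply the symmetry Lemma~\ref{lem9} with $T=2$, verify the joint identity $s_1-\eta_2(1+x_D-\delta)=\eta_2(\delta)$ using (\ref{eq11}), and concatenate by transitivity of the billiard map. Two cosmetic slips do not affect this: the mirrored side labels are $(1,3,1)$, not $(1,2,1)$, since $\bar{2}=3$ (they still match the asserted $k_t=\bar{\ell}_{4-t}$), and the joint identity actually reduces to the $\delta$-free relation $\sin(2\alpha+\phi_*)/\sin(\phi_*)=1-x_D$ rather than the $\delta$-dependent equation you display, though the conclusion you draw, namely the definition (\ref{eq11}) of $x_D$, is exactly the right one.
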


\begin{proof}
Let $\delta \in (x_D,1)$. Lemma \ref{lem10} provides us with the regular
orbit of length 2, $([\ell_t,\theta_t,\eta_t(\delta)])_{0\leq t \leq 2}$, with
initial condition $[1,\phi_*,\delta]$ and endpoint $[1,\theta_2,\eta_2(\delta)]$.
Replacing $\delta$ by $1+x_D-\delta$, Lemma \ref{lem10} yields the regular length-2 orbit given by
$([\ell_t,\theta_t,\eta_t(1+x_D-\delta)])_{0\leq t \leq 2}$.
Applying Lemma \ref{lem9} we obtain the regular orbit
$([\bar{\ell}_{s-t},\theta_{s-t},s_{\bar{\ell}_{2-t}}-\eta_{2-t}(1+x_D-\delta)])_{0\leq t \leq 2}$
with initial condition $[1,\theta_2,s_1-\eta_2(1+x_D-\delta)]$ and endpoint
$[1,\phi_*,\delta-x_D]$. Recalling that (\ref{eq11}), (\ref{eq13}) and (\ref{eq14})
imply $x_D=1-\sin(\theta_2)/\sin(\theta_0)$ and
$\eta_2(\delta)=1-(1-\delta) \sin(\theta_0)/\sin(\theta_2)$ we obtain
$s_1-\eta_2(1+x_D-\delta)=\eta_2(\delta)$.
The assertions of the proposition follow by transitivity of orbits of the billiard map.
\end{proof}

\subsection{Proof of the theorem and its corollary}\label{subsec5}

Propositions \ref{prop1} and \ref{prop2} constitute the proof of Theorem \ref{thrm1} with the expression for the rotation number $\omega=1-x_D$
following readily from Lemma \ref{lem6a}.
The proof of the corollary will be based on the following lemma which
summarises the findings in Lemma \ref{lem5} and \ref{lem10}.

\begin{lemma}\label{lem11}
Let $\alpha\in (\alpha_*,3\pi/10)$. There exists
$\varepsilon>0$ such that any infinite regular orbit
$([k_t,\phi_t^{[k_t]},x_t^{[k_t]})_{t\geq 0}$  of the billiard map with initial
condition $([k_0,\phi_0^{[k_0]},x_0^{[k_0]}]=[1,\phi_*,x_0^{[1]}]$ satisfies
the conditions $x_t^{[k_t]}\leq s_2-\varepsilon$ whenever $k_t=2$, and $x_t^{[k_t]}\geq  \varepsilon$ whenever
$k_t=3$.
\end{lemma}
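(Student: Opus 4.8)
The plan is to exploit the explicit description of orbits provided by Propositions \ref{prop1} and \ref{prop2}, together with a compactness argument. First I would observe that any infinite regular orbit starting from $[1,\phi_*,x_0^{[1]}]$ with $x_0^{[1]}\neq x_D$ decomposes, by induction on blocks, into successive length-$10$ or length-$4$ returns to the base at angle $\phi_*$: Proposition \ref{prop1} applies whenever the current base coordinate lies in $(0,x_D)$ and Proposition \ref{prop2} applies whenever it lies in $(x_D,1)$; in either case the orbit returns to the base at the same angle $\phi_*$ with a new coordinate, and the process can be repeated. (If the orbit were ever to hit exactly $x_D$ it would be singular, hence not a regular infinite orbit; and $x_0^{[1]}=x_D$ is excluded for the same reason.) Consequently every side-$2$ or side-$3$ coordinate occurring along the orbit is one of the finitely many functions $\xi_t(\delta)$ (for $t$ with $m_t\in\{2,3\}$) or $\eta_1(\delta)$, with argument $\delta$ ranging over a compact subset of $[0,x_D]$ or $[x_D,1]$ respectively, or one of their mirrored counterparts $s_{\bar m_{10-t}}-\xi_{10-t}(x_D-\delta)$ etc.

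Second, I would reduce the claimed uniform bounds to a finite family of estimates on these explicit functions. By Lemma \ref{lem4} the relevant denominators $\sin(\psi_t)$, $\sin(\theta_t)$ are bounded away from $0$ on the compact $\alpha$-interval under consideration, so each $\xi_t$, $\eta_t$ is a continuous (indeed Lipschitz) function of $\delta$ on the closed interval $[0,x_D]$ (resp.\ $[x_D,1]$). Lemmas \ref{lem5}, \ref{lem7}, and \ref{lem10} already establish the strict inequalities $0<\xi_t(\delta)<s_{m_t}$, $0<\eta_1(\delta)<s_2$ at the endpoints $\delta=0$, $\delta=x_D$, $\delta=1$; what remains is to check the corresponding strict inequalities on the open interiors and then invoke continuity on the compact interval to extract a uniform $\varepsilon>0$. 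For the side-$2$ coordinates I would verify $\xi_1(\delta)<s_2$ and $\eta_1(\delta)<s_2$ directly from the monotonicity in $\delta$ visible in (\ref{eq8}) and (\ref{eq14}) — each is affine in $\delta$ with a sign-definite slope — reducing to the two endpoint checks already done; similarly for the side-$3$ coordinates $\xi_2,\xi_4$ one uses the affine-in-$\delta$ structure to reduce to endpoints. The mirrored pieces in the second halves of Propositions \ref{prop1} and \ref{prop2} contribute bounds of the form $s_{\bar m}-\xi(\cdot)$, $s_{\bar\ell}-\eta(\cdot)$, which translate a lower bound on a side-$3$ coordinate into an upper bound on a side-$2$ coordinate and vice versa, exactly matching the two assertions of the lemma; so no new estimates are needed there beyond the adjoint bookkeeping $\bar 2=3$.

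Finally, I would take $\varepsilon$ to be the minimum over this finite list of gaps: $\varepsilon = \min$ of the quantities $\inf_\delta \xi_t(\delta)$, $\inf_\delta(s_{m_t}-\xi_t(\delta))$, $\inf_\delta \eta_1(\delta)$, $\inf_\delta(s_2-\eta_1(\delta))$, and their mirrored analogues, each infimum being over the relevant compact $\delta$-interval and each being strictly positive by the continuity/endpoint argument above. Since every side-$2$ or side-$3$ point along any infinite regular orbit through $[1,\phi_*,x_0^{[1]}]$ equals one of these expressions, the bounds $x_t^{[k_t]}\le s_2-\varepsilon$ when $k_t=2$ and $x_t^{[k_t]}\ge\varepsilon$ when $k_t=3$ follow. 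The main obstacle is purely organisational rather than analytic: one must be careful that the block decomposition genuinely exhausts the orbit (no "escape" into a different bouncing-side pattern is possible precisely because $[1,\phi_*,\delta]$ with $\delta\neq x_D$ falls into exactly one of the two cases of Propositions \ref{prop1}–\ref{prop2}, and the return point again has angle $\phi_*$), and that the adjoint-index translation in Lemma \ref{lem9} is applied consistently so that the "mirrored" coordinates land on the side one expects. Once that bookkeeping is in place, the positivity of $\varepsilon$ is immediate from compactness.
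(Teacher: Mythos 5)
Your argument follows the paper's proof essentially step for step: the infinite regular orbit is decomposed into successive length-10 and length-4 returns to the base at angle $\phi_*$ via Propositions \ref{prop1} and \ref{prop2} (with the base values $0$, $1$, $x_D$ excluded because they are singular or mapped to singular points, cf.\ Lemma \ref{lem7}), and the uniform bounds come from the monotonicity in $\delta$ of the affine coordinate functions in (\ref{eq8}) and (\ref{eq14}) together with the endpoint values supplied by Lemmas \ref{lem5}, \ref{lem7} and \ref{lem10}. The paper's $\varepsilon$ is exactly the finite minimum of one-sided gaps you describe, namely $\min\{s_2-\sin(\theta_2)/\sin(\theta_1),\, s_2-\sin(\psi_0)/\sin(\psi_1),\, s_2-\sin(\psi_5)/\sin(\psi_2),\, s_2-\sin(\psi_5)/\sin(\psi_4)\}$.

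One slip in your final step needs correcting. You define $\varepsilon$ as the minimum over \emph{both} the lower gaps $\inf_\delta \xi_t(\delta)$, $\inf_\delta\eta_1(\delta)$ \emph{and} the upper gaps $\inf_\delta(s_{m_t}-\xi_t(\delta))$, $\inf_\delta(s_2-\eta_1(\delta))$ for every coordinate function, claiming each is strictly positive. Taken literally this fails: $\eta_1(1)=0$ and $\xi_2(x_D)=\xi_4(x_D)=s_3$ by Lemma \ref{lem7}, so $\inf_{\delta\in[x_D,1]}\eta_1(\delta)=0$ and $\inf_{\delta\in[0,x_D]}\bigl(s_3-\xi_2(\delta)\bigr)=0$, and your stated minimum is zero. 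The cure is already implicit in your second paragraph: the lemma asserts only an upper bound on side-2 coordinates and a lower bound on side-3 coordinates (the orbit is kept away from the apex, not from the base vertices), so $\varepsilon$ must be taken over precisely those one-sided gaps, which are the ones Lemmas \ref{lem5} and \ref{lem10} make strictly positive. With that restriction your proof coincides with the paper's; the slip is bookkeeping, not a conceptual gap.
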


\begin{proof}
The spatial coordinate $x_t^{[k_t]}$ does not take the values 0,1, or $x_D$ if $k_t=1$
as those are singularities or are mapped to singularities, see Lemma \ref{lem7}.
Furthermore,
by Proposition \ref{prop1} and \ref{prop2} the orbit is recurrent. Hence it is sufficient
to consider the 4- and 10-recurrent pieces of the orbit.

Consider $[k_0,\phi_0^{[k_0]} ,x_0^{[k_0]}]=[1,\phi_*,\delta]$ with $x_D<\delta<1$, that is,
a piece of the orbit in a 4-recurrent cylinder. Since by (\ref{eq11}) and (\ref{eq14})
\begin{align*}
\eta_1(\delta) &\leq   \eta_1(x_D) = \frac{\sin(\theta_2)}{\sin(\theta_1)}\\
s_3-\eta_1(1+x_D-\delta) &\geq  s_3-\eta_1(x_D)= s_3- \frac{\sin(\theta_2)}{\sin(\theta_1)}
\end{align*}
we conclude from Proposition \ref{prop2} that for $0\leq t \leq 4$ we have
$x_t^{[k_t]}\leq \sin(\theta_2)/\sin(\theta_1)$ whenever $k_t=2$ and
$x_t^{[k_t]}\geq s_3-\sin(\theta_2)/\sin(\theta_1)$ whenever $k_t=3$.

Similarly, consider  $[k_0,\phi_0^{[k_0]} ,x_0^{[k_0]}]=[1,\phi_*,\delta]$ with $0<\delta<x_D$,
that is, a part of the orbit in a 10-recurrent cylinder. Then by (\ref{eq8}) and (\ref{eq9})
\begin{align*}
\xi_1(\delta) &\leq  \xi_1(0) = \frac{\sin(\psi_0)}{\sin(\psi_1)}\\
\xi_2(\delta) &\geq  \xi_2(0)= s_3-\frac{\sin(\psi_5)}{\sin(\psi_2)}\\
\xi_4(\delta) &\geq  \xi_4(0) = s_3 -\frac{\sin(\psi_5)}{\sin(\psi_4)}\\
s_2-\xi_4(x_D-\delta) &\leq  s_2-\xi_4(0) = \frac{\sin(\psi_5)}{\sin(\psi_4)}\\
s_2 - \xi_2(x_D-\delta) &\leq  s_2-\xi_2(0)= \frac{\sin(\psi_5)}{\sin(\psi_2)}\\
s_3-\xi_1(x_D-\delta) &\geq  s_3-\xi_1(0)=s_3-\frac{\sin(\psi_0)}{\sin(\psi_1)} \, .
\end{align*}
Hence we conclude from Proposition \ref{prop1} that for $0\leq t \leq 10$ we have
$x_t^{[k_t]}\leq \varepsilon$ whenever $k_t=2$ and
$x_t^{[k_t]}\geq s_3-\varepsilon$ whenever $k_t=3$, where
\[ \varepsilon=\min\{s_2-\sin(\psi_0)/\sin(\psi_1), s_2-\sin(\psi_5)/\sin(\psi_2),
s_2-\sin(\psi_5/\sin(\psi_4)\}\,.\]

Altogether, the claim of the lemma is valid with the choice
\begin{displaymath}
\varepsilon=
\min\left \{s_2-\frac{\sin(\theta_2)}{\sin(\theta_1)},
s_2-\frac{\sin(\psi_0)}{\sin(\psi_1)}, s_2-\frac{\sin(\psi_5)}{\sin(\psi_2)},
s_2-\frac{\sin(\psi_5)}{\sin(\psi_4)}\right \}\,.
\end{displaymath}
Lemma \ref{lem5} and \ref{lem10} ensure that $\varepsilon>0$.
\end{proof}

Since $2\cos(\alpha)\cos(4 \alpha)=\cos(5 \alpha)-\cos(3 \alpha)$ it follows that choosing
$\alpha \in (\alpha_*,3\pi/10)$ such that $\cos(5 \alpha)/\cos(3 \alpha) \in
\mathbb{R}\setminus \mathbb{Q}$ ensures that the
map in Theorem \ref{thrm1} is an irrational rotation. In fact, the ratio $\cos(5 \alpha)/\cos(3 \alpha)$ 
is continuous and strictly monotonic for $\alpha \in (\alpha_*,3\pi/10)$, so that apart from a countable set
of $\alpha$ values we obtain an irrational rotation. For explicit examples
of such angles we invoke the Gelfond--Schneider Theorem 
(see, for example, \cite[Theorem~5.1]{BuTu_2004}), according to which for any algebraic $a\in \mathbb{R}\setminus\{0,1\}$ and algebraic irrational $b$, the number $a^b$ is transcendental. Thus, if $\alpha=\pi \beta$ with $\beta\neq 0$ an algebraic irrational number, then
$\cos(5 \alpha)/\cos(3 \alpha)$ must be irrational, otherwise $\exp(\mathrm{i}\alpha)=\exp(\mathrm{i}\pi\beta)=(-1)^\beta$ 
would be algebraic, which contradicts the Gelfond-Schneider Theorem. 

Hence, if  $\alpha \in (\alpha_*,3\pi/10)$ with  $\cos(5 \alpha)/\cos(3 \alpha) \in
\mathbb{R}\setminus \mathbb{Q}$, it follows that Lebesgue almost all initial values $x_0^{[1]}$ will give rise to a regular
non-periodic orbit with initial condition $[1,\phi_*,x_0^{[1]}]$. By
Lemma \ref{lem11} the corresponding trajectory does not have bounces
on the sides within a distance $\epsilon>0$ of the tip of the triangle (when distance is
measured along the bouncing side). Hence, the trajectory does not enter a small symmetric
triangular region at the tip of the triangle and is thus not everywhere dense. A graphical illustration of this type of trajectory
is shown in Figure~\ref{fig2}.

\begin{figure}[!h]
\centering
\includegraphics[width=0.5\textwidth]{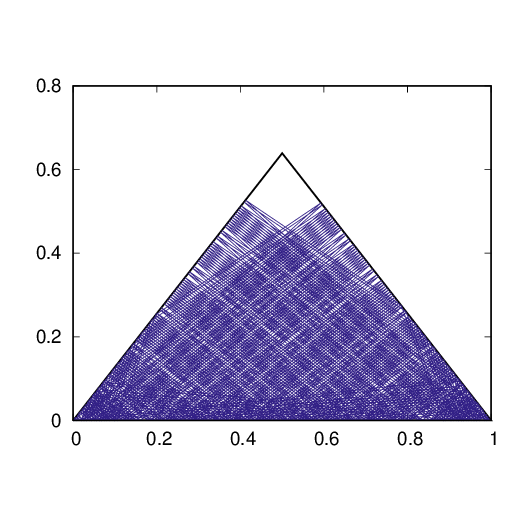}
\caption{Finite trajectory of 500 bounces with initial condition
on the base at $x_0^{[1]}=1/\sqrt{2}$, $\phi_0=0.7329252\ldots$, see (\ref{eq3}) in an isosceles triangle with base angle
$\alpha=\pi \sqrt{3}/6$ (see Lemma \ref{lem1}).}
\label{fig2}
\end{figure}

\section*{Acknowledgement}
The authors thank an anonymous  reviewer
for many constructive and insightful
suggestions, which greatly improved
the exposition of this article.

\section*{Funding and Competing interests}
All authors gratefully acknowledge the support for the research presented 
in this article by the EPSRC grant EP/RO12008/1. 
J.S. was partly supported by the ERC-Advanced Grant
833802-Resonances and 
W.J. acknowledges funding by the Deutsche Forschungsgemeinschaft
(DFG, German Research Foundation) SFB 1270/2 - 299150580.

The authors have no competing interests to declare that are relevant 
to the content of this article.


\begin{thebibliography}{10}

\bibitem{ArCaGu_PRE97}
R.~Artuso, G.~Casati, and I.~Guarneri.
\newblock Numerical study on ergodic properties of triangular billiards.
\newblock {\em Phys.\ Rev.\ E}, 55:6384, 1997.

\bibitem{BoTr_FM11}
J.~Bobok and S.~Troubetzkoy.
\newblock Does a billiard orbit determine its (polygonal) table?
\newblock {\em Fundamenta Mathematicae}, 212:129, 2011.

\bibitem{BuTu_2004}
E.B.~Burger and A.~Tubbs. 
\newblock {\em Making Transcendence Apparent}. 
\newblock New York, Springer, 2004. 

\bibitem{CaPr_PRL99}
G.~Casati and T.~Prosen.
\newblock Mixing property of triangular billiards.
\newblock {\em Phys.\ Rev.\ Lett.}, 83:4729, 1999.

\bibitem{ChMa:06}
N.~Chernov and R.~Markarian.
\newblock {\em Chaotic Billiards}.
\newblock Mathematical Surveys and Monographs, vol. 127, American Mathematical
  Society, 2006.

\bibitem{DaDiRuLa_AG18}
Diana Davis, Kelsey DiPietro, Jenny Rustad, and Alexander~{St} Laurent.
\newblock Negative refraction and tiling billiards.
\newblock {\em Adv. Geom.}, 18:133, 2018.

\bibitem{Galp_CMP83}
G.A. Galperin.
\newblock Non-periodic and not everywhere dense billiard trajectories in convex
  polygons and polyhedrons.
\newblock {\em Comm. Math. Phys.}, 91:187, 1983.

\bibitem{Gutk_ETDS84}
E.~Gutkin.
\newblock Billiards on almost integrable polyhedral surfaces.
\newblock {\em Ergod.\ Th.\ \& Dynam.\ Sys.}, 4:569, 1984.

\bibitem{Gutk_JSP96}
E.~Gutkin.
\newblock Billiards in polygons: Survey of recent results.
\newblock {\em J.\ Stat.\ Phys.}, 83:7, 1996.

\bibitem{Gutk_CHAOS12}
E.~Gutkin.
\newblock Billiard dynamics: An updated survey with the emphasis on open
  problems.
\newblock {\em Chaos}, 22:026116, 2012.

\bibitem{KeSm_CMV00}
R.~Kenyon and J.~Smillie.
\newblock Billiards on rational-angled triangles.
\newblock {\em Comment. Math. Helv.}, 75:65, 2000.

\bibitem{KeMaSm_AM86}
S.~Kerckhoff, H.~Masur, and J.~Smillie.
\newblock Ergodicity of billiard flows and quadratic differentials.
\newblock {\em Ann.\ Math.}, 115:293, 1986.

\bibitem{MaTa:02}
H.~Masur and S.~Tabachnikov.
\newblock Rational billiards and flat structures.
\newblock In B.~Hasselblatt and A.~Katok, editors, {\em Handbook of Dynamical
  Systems, Vol 1A}, page 1015. North Holland, Amsterdam, 2002.

\bibitem{Mcmu_IM06}
C.T. McMullen.
\newblock Teichm\"uller curves in genus two: torsion divisors and ratios of
  sines.
\newblock {\em Inv. Math.}, 165:651, 2006.

\bibitem{Toka_CMP15}
G.W. Tokarsky.
\newblock Galperin's triangle example.
\newblock {\em Comm. Math. Phys.}, 335:1211, 2015.

\bibitem{Trou_RCD04}
S.~Troubetzkoy.
\newblock Recurrence and periodic billiard orbits in polygons.
\newblock {\em Regul. Chaotic Dyn.}, 9:1, 2004.

\bibitem{Trou_AIF05}
S.~Troubezkoy.
\newblock Periodic billiard orbits in right triangles.
\newblock {\em Ann. Inst. Four.}, 55:29, 2005.

\bibitem{Veec_IM89}
W.A. Veech.
\newblock Teichm\"uller curves in moduli space, Eisenstein series and an
  application to triangular billiards.
\newblock {\em Inv. Math.}, 97:553, 1989.

\bibitem{WaCaPr_PRE14}
J.~Wang, G.~Casati, and T.~Prosen.
\newblock Nonergodicity and localization of invariant measure for two colliding
  masses.
\newblock {\em Phys.\ Rev.\ E}, 89:042918, 2014.

\bibitem{ZaSlBaJu_PRE22}
K.~Zahradova, J.~Slipantschuk, O.~F. Bandtlow, and W.~Just.
\newblock Impact of symmetry on ergodic properties of triangular billiards.
\newblock {\em Phys. Rev.\ E}, 105:L012201, 2022.

\bibitem{ZaSlBaJu_PD23}
K.~Zahradova, J.~Slipantschuk, O.F. Bandtlow, and W.~Just.
\newblock Anomalous dynamics in symmetric triangular irrational billiards.
\newblock {\em Physica D}, 445:133619, 2023.

\bibitem{ZeKa_MNAS75}
A.~N. Zemlyakov and A.~B. Katok.
\newblock Topological transitivity of billiards in polygons.
\newblock {\em Math. Notes Acad.\ Sci.\ USSR}, 18:760, 1975.

\end{thebibliography}
\end{document}